\documentclass[12 pt]{amsart}
\usepackage[utf8]{inputenc}
\usepackage{yfonts}
\textwidth6.2in
\hoffset=-0.5in
\usepackage{latexsym}
\usepackage{amsthm}
\usepackage{amsmath}
\usepackage{amssymb}
\usepackage{verbatim}
\usepackage{color}
\newtheorem{theorem}{Theorem}
\newtheorem{corollary}{Corollary} 
\newtheorem{lemma}[theorem]{Lemma}
\newtheorem{conj}[theorem]{Conjecture} 
\newtheorem{example}[theorem]{Example}
\newtheorem{remark}[theorem]{Remark}
\newtheorem{defi}[theorem]{Definition}
\def \n{\noindent }
\def \bs{\bigskip}
\def \F{\mathcal F}
\def \B{\mathcal B}

\def \Z{\mathbb Z}
\def \N{\mathbb N}
\def \Z{\mathbb Z}
\def \zs{z.s.s }
\def \zsb{z.s.s{\tiny b} }
\def \m{\mathsf{s}'}
\def \ls{\mathsf{s}}

\title{Avoiding Zero-Sum Subsequences of Prescribed Length over the integers
} 
\author[C. Augspurger et al.]{C. Augspurger, M. Minter, K. Shoukry, P. Sissokho, and K. Voss\\ \\
	Mathematics Department, Illinois State University \\
	Normal, IL 61790--4520, U.S.A.}
\thanks{{\em Corresponding author:} psissok@ilstu.edu}
\begin{document}
\begin{abstract}
Let $t$ and $k$ be a positive integers, and let $I_k=\{i\in \mathbb{Z}:\; -k\leq i\leq k\}$. 
Let $\mathsf{s}'_t(I_k)$ be the smallest positive integer $\ell$ such that every zero-sum sequence $S$ over $I_k$ of length $|S|\ge \ell$ contains a zero-sum subsequence of length $t$. If no such $\ell$ exists, then let $\m_t(I_k)=\infty$.

In this paper, we prove that $\mathsf{s}'_t(I_k)$ is finite if and only if every integer in $[1,D(I_k)]$ divides $t$, where $D(I_k)=\max\{2,2k-1\}$ is the Davenport constant of $I_k$. Moreover, we prove that if  $\mathsf{s}'_t(I_k)$ is finite, then $t+k(k-1)\leq \mathsf{s}'_t(I_k)\leq t+(2k-2)(2k-3)$. We also show that $\mathsf{s}'_t(I_k)=t+k(k-1)$ holds for $k\leq 3$ and 
conjecture that this equality holds for any $k\geq1$.
\end{abstract}
\keywords{zero-sum sequence over ${\mathbb Z}$; no zero-sum subsequence of a given length.}
\maketitle
\section{Introduction and Main results}\label{sec:intro}
We shall follow the notation in~\cite{Gr}, by Grynkiewicz.
Let $\N$ be the set of positive integers. Let $G_0$ a subset of an abelian group $G$.
A sequence over $G_0$ is an unordered list of terms in $G_0$,
where repetition is allowed. The 
set of all sequences over $G_0$ is denoted by 
$\F(G_0)$
A sequence with no term is called {\em trivial} or {\em empty}. 
If $S$ is a sequence with terms $s_i$, $1\leq i\leq n$, we 
write $S=s_1\cdot\ldots\cdot s_n=\prod_{i=1}^n s_i$.
We say that $R$ is a {\em subsequence} of $S$ if any term in 
$R$ is in $S$. If $R$ and $T$ are subsequences
of $S$ such that $S=R\cdot T$, then $R$ is the {\em complementary} sequence of $T$ in $S$, and vice versa. We also write $T=S\cdot R^{-1}$ and $R=S\cdot T^{-1}$. For every sequence $S=s_1\cdot\ldots\cdot s_n$ over $G_0$, 
\begin{itemize}
\item $-S=(-s_1)\cdot\ldots\cdot (-s_n)$ 
\item the {\em length} of $S$ is $|S|=n$;
\item the {\em sum} of $S$ is $\sigma(S)=s_1+s_2+\ldots+s_n$;
\item the {\em subsequence-sum} of $S$ is  $\Sigma(S)=\{\sigma(R):\mbox{ $R$ is a subsequence of $S$}\}$.
\end{itemize}
For any sequence $R$ over $G_0$ and any integer $d\geq 0$, 
\[\mbox{$R^{[0]}$ is the trivial sequence, and $R^{[d]}=\underbrace{R\cdot\ldots\cdot R}_d$ for $d>0$.}\]

A sequence with sum $0$ is called {\em zero-sum}. The 
set of all zero-sum sequences over $G_0$ is denoted by 
$\B(G_0)$. A zero-sum sequence is called {\em minimal} if it does not contain a proper zero-sum subsequence.
The {\em Davenport constant} of $G_0$, denoted by $D(G_0)$ is the maximum length of a minimal zero-sum sequence over $G_0$.
The research on zero-sum theory is quite extensive 
when $G$ is a finite abelian groups (e.g., see~\cite{C,EGZ,Ga,GG} and the references therein). However, there is less activity 
when $G$ is infinite (e.g., see~\cite{BCRSS,CSS} and the references therein). The study of the particular case 
$G=\Z^r$ was explicitly suggested by Baeth and Geroldinger~\cite{BG} due to their relevance to direct-sum decompositions of modules.  In a recent paper, 
 Baeth et al.~\cite{BGGS} studied the Davenport constant of 
 $G_0\subseteq \Z^r$. The Davenport constant of an interval in 
$\Z$ was first derived (see Theorem~\ref{thm:La}) by Lambert~\cite{La} (also see~\cite{DGS,SST,Si} for related work.)
In a recent paper, Plagne and Tringali~\cite{PT}, studied the Davenport constant of the cartesian product of intervals of $\Z$. 

For any integers $x$ and $y$ with $x\leq y$, 
let $[x,y]=\{i\in \Z:\; x\leq i\leq y\}$. 
For $k\in\N$, let $I_k=[-k,k]$. 
\begin{theorem}[Lambert~\cite{La}]\label{thm:La}
$D(I_k)=\max\{2,2k-1\}$ for any $k\in\N$.
\end{theorem}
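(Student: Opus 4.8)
The plan is to prove the two inequalities $D(I_k)\ge \max\{2,2k-1\}$ and $D(I_k)\le \max\{2,2k-1\}$ separately, the first by an explicit construction and the second by a structural analysis of minimal zero-sum sequences.

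For the lower bound, note first that $0$ is a minimal zero-sum sequence of length $1$ and $1\cdot(-1)$ is a minimal zero-sum sequence of length $2$, so $D(I_1)\ge 2$. For $k\ge 2$ I would exhibit the sequence
\[
S=(k-1)^{[k]}\cdot(-k)^{[k-1]},
\]
whose terms lie in $I_k$ and whose sum is $k(k-1)+(k-1)(-k)=0$, so $S\in\B(I_k)$ and $|S|=2k-1$. To see that $S$ is minimal, observe that a subsequence of $S$ has the form $(k-1)^{[a]}\cdot(-k)^{[b]}$ with $0\le a\le k$ and $0\le b\le k-1$, and its sum vanishes iff $a(k-1)=bk$; since $\gcd(k-1,k)=1$ this forces $k\mid a$ and $(k-1)\mid b$, whence $(a,b)\in\{(0,0),(k,k-1)\}$. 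Thus the only zero-sum subsequences of $S$ are the empty one and $S$ itself, so $S$ is minimal and $D(I_k)\ge 2k-1=\max\{2,2k-1\}$.

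For the upper bound, let $S\in\B(I_k)$ be minimal with $|S|=n$. If some term is $0$ then minimality forces $S=0$ and $n=1$, so assume $n\ge 2$ and every term is nonzero. Write $S=P\cdot N$, where $P$ collects the positive terms (say $r$ of them) and $N$ the negative terms ($q$ of them), and set $s=\sigma(P)=-\sigma(N)>0$, so $n=r+q$. The key reformulation of minimality is that
\[
\Sigma(P)\cap\Sigma(-N)=\{0,s\};
\]
indeed any subsequences $P'$ of $P$ and $N'$ of $N$ with $\sigma(P')=-\sigma(N')=v$ yield a zero-sum subsequence $P'\cdot N'$, which is proper and nontrivial exactly when $0<v<s$. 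I would then record the elementary density lemma: if $T$ is a sequence of integers in $[1,k]$, then consecutive elements of $\Sigma(T)$ differ by at most $k$ (given $c\in\Sigma(T)$ with $c<\sigma(T)$, adding any term of $T$ not used to realize $c$ produces an element of $\Sigma(T)$ in $(c,c+k]$). Hence both $\Sigma(P)$ and $\Sigma(-N)$ are subsets of $[0,s]$ with no gap exceeding $k$, and by the displayed identity they are disjoint off the endpoints.

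The crux --- and the step I expect to be the main obstacle --- is to turn this disjointness into the bound $n=r+q\le 2k-1$. The naive count that the interior subsequence sums are distinct only gives $n\le s+1$, which is far too weak since $s$ can be of order $k^2$; the bounded-gap property must be combined with the additive closure of subsequence-sum sets. Concretely, writing $\alpha_c$ for the multiplicity of the value $c\in[1,k]$ in $P$ and $\beta_d$ for that of $-d$ in $N$, minimality forbids $\alpha_c\beta_c>0$ and, for every pair with $\alpha_c,\beta_d>0$, forces the progressions $\{0,c,\dots,\alpha_c c\}\subseteq\Sigma(P)$ and $\{0,d,\dots,\beta_d d\}\subseteq\Sigma(-N)$ to meet only at $0$ and $s$; that is, $\mathrm{lcm}(c,d)>\min\{\alpha_c c,\beta_d d\}$ unless $\mathrm{lcm}(c,d)=s$. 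I would show that these conditions (together with the analogous ones coming from mixed subsequence sums) are tight only for a single positive value $k-1$ and a single negative value $-k$, and that any configuration with $r+q\ge 2k$ produces a common interior subsequence sum, contradicting minimality. The extremal analysis --- reducing an arbitrary minimal sequence to one with few distinct values by an exchange argument, and then solving the resulting optimization $\sum_c\alpha_c+\sum_d\beta_d\le 2k-1$ --- is where the real work lies; the case $k\le 3$ can be checked directly, which also anchors the induction.
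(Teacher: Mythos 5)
First, a point of reference: the paper does not prove Theorem~\ref{thm:La} at all --- it quotes Lambert's result --- so there is no internal proof to compare against; your proposal must stand on its own. Its lower-bound half does: $(k-1)^{[k]}\cdot(-k)^{[k-1]}$ lies in $\B(I_k)$, has length $2k-1$, and your coprimality argument correctly shows its only zero-sum subsequences are the trivial one and the whole sequence; together with $0$ and $1\cdot(-1)$ for $k=1$ this gives $D(I_k)\ge\max\{2,2k-1\}$. (The same construction, in mirrored form, underlies Lemma~\ref{lem:3.0} of the paper.)

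The upper bound, however, contains a genuine gap, and you flag it yourself. The steps you actually carry out --- splitting $S=P\cdot N$, the identity $\Sigma(P)\cap\Sigma(-N)=\{0,s\}$, and the gap-at-most-$k$ property of $\Sigma(P)$ and $\Sigma(-N)$ --- are correct but, as you note, yield only $n\le s+1$. Everything that would convert them into $n\le 2k-1$ is deferred to an unspecified ``extremal analysis'' and ``exchange argument'': the lcm conditions you write down constrain only sums built from a single repeated value, whereas the mixed subsequence sums (which you acknowledge only parenthetically) are exactly where the difficulty lies, and no induction scheme is set up for the claim that $r+q\ge 2k$ forces a common interior sum. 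As written, the proposal proves the lower bound only. The standard way to close the gap (essentially Lambert's argument) bypasses $\Sigma(P)\cap\Sigma(-N)$ entirely: order the terms of a minimal zero-sum sequence $S$ greedily, taking a positive term when the running sum is $\le 0$ and a negative term when it is $>0$, so that the partial sums $\sigma_0=0,\sigma_1,\dots,\sigma_{n-1}$ all lie in $[1-k,k-1]\cup\{k\}$. Minimality forces these $n$ values to be pairwise distinct, since equal partial sums would produce a proper nontrivial zero-sum block of consecutive terms; hence $n\le 2k$. If $n=2k$, every value is attained, and $k$ can only be $\sigma_1$, so $k$ is a term of $S$; applying the same argument to $-S$ shows $-k$ is also a term, so $k\cdot(-k)$ is a proper zero-sum subsequence (as $n=2k>2$), a contradiction. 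That pigeonhole-on-partial-sums idea is the single ingredient missing from your plan.
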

For $G$ finite and $G_0\subseteq G$, let $\ls_t(G_0)$ be the smallest integer $\ell$ such that any sequence $S\in\F(G_0)$ of length $|S|\ge \ell$ contains a zero-sum subsequence of length $t$.
If $t={\rm exp}(G)$, then $\ls_t(G_0)$ is called the 
Erd\"os--Ginzburg--Ziv constant and is denoted by $\ls(G)$.
In 1961, Erd\"os--Ginzburg--Ziv~\cite{EGZ} proved that $\ls(\Z_n)=2n-1$.  Reiher~\cite{Re} proved that $\ls(\Z_p\oplus \Z_p)=4p-3$ for any prime $p$. In general, if $G$ has rank two, say $G = Z_{n_1} \oplus Z_{n_2}$ with $1 \le n_1 \mid n_2$, then $\mathsf s (G) = 2n_1+2n_2-3$  (see Theorem 5.8.3 in Geroldinger--Halter--Koch~\cite{GHK}). For groups of higher rank, we refer the reader to Fan--Gao--Zhong~\cite{FGZ}. More recently, 
Gao et al~\cite{GHPS} proved that for any integer $k\geq2$ and any finite $G$ with exponent $n={\rm exp}(G)$, if the 
difference $n-|G|/n$ is large enough, then 
$\ls_{kn}(G)=kn+D(G)-1$. 
 
Observe that if $G$ is torsion-free and $G_0\subseteq G$,
then for any nonzero $g\in G_0$ and for any $d\in\N$, the sequence $g^{[d]}\in\F(G_0)$ does not contain a zero-sum subsequence. Thus, we will work with the following analogue 
of $\ls_t(G_0)$.
\begin{defi}\footnote{This formulation was suggested to us by Geroldinger and Zhong~\cite{GZ}.}
For any subset $G_0\subseteq G$, let $\m_t(G_0)$ be the smallest positive integer $\ell$ such that any sequence $S\in\B(G_0)$ of length $|S|\ge \ell$ contains a zero-sum subsequence of length $t$.
If no such $\ell$ exists, then let $\m_t(G_0)=\infty$.
\end{defi}
If $t={\rm exp}(G)$ is finite, then we denote $\ls_t(G_0)$ by $\ls(G)$. Let $r\in\N$ and assume that  $G\cong \Z_n^r$. Then $G$ has {\rm Property D} if any sequence $S\in\F(G)$ of length $\ls(G)-1$ that does not 
admit a zero-sum subsequence  has the form $S=T^{[n-1]}$ 
for some $T\in\F(G)$. Zhong found the following interesting connections between  $\ls(G)$ and $\m(G)$ (see the Appendix 
for their proofs). 
\begin{lemma}[Zhong~\cite{Z}]\label{lem:QZ} Let $G$ be a finite abelian group.
	
	\n $(i)$  If $\gcd(\ls(G)-1,{\rm exp}(G))=1$,
	then $\m(G)=\ls(G)$. 
	
	\n $(ii)$ If $G\cong \Z_n^r$, with $n\geq 3$ and $r\geq2$.
	Suppose that $\ls(G)=c(n-1)+1$ and $G$ has Property D.
	If $\gcd(\ls(G)-1,n)=c$, then $\m(G)<\ls(G)$.
\end{lemma}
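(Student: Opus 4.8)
The plan is to treat both parts as refinements of the elementary inequality $\m(G)\le\ls(G)$, which holds for every finite abelian $G$: since $\B(G)\subseteq\F(G)$, any length $\ell$ that forces a length-${\rm exp}(G)$ zero-sum subsequence in every sequence of $\F(G)$ does so a fortiori in every zero-sum sequence. Write $n={\rm exp}(G)$ and $L=\ls(G)-1$ throughout. Note that $L$ is exactly the maximal length of a sequence in $\F(G)$ having no length-$n$ zero-sum subsequence (by minimality of $\ls(G)$ together with the fact that "having no length-$n$ zero-sum subsequence" passes to subsequences), so such an extremal sequence of length exactly $L$ is available.

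For part $(i)$ I would prove the reverse inequality $\m(G)\ge\ls(G)$ by upgrading such an extremal sequence to a \emph{zero-sum} one. Start from $S=s_1\cdot\ldots\cdot s_L\in\F(G)$ of length $L$ with no length-$n$ zero-sum subsequence, and translate all terms: for $a\in G$ set $S_a=(s_1+a)\cdot\ldots\cdot(s_L+a)$. The crucial observation is that translation does not change which length-$n$ subsequences are zero-sum, because any choice of $n$ terms acquires an extra summand $na=0$ (here $n={\rm exp}(G)$ annihilates $G$); hence $S_a$ also has no length-$n$ zero-sum subsequence. Since $\sigma(S_a)=\sigma(S)+La$ and $\gcd(L,n)=1$ makes multiplication by $L$ an automorphism of the $\Z/n\Z$-module $G$, I can solve $La=-\sigma(S)$ and render $S_a$ zero-sum. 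This $S_a\in\B(G)$ witnesses $\m(G)>L$, so $\m(G)=\ls(G)$.

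For part $(ii)$ the target is $\m(G)\le L<\ls(G)$. Because any zero-sum sequence of length $\ge L+1=\ls(G)$ already contains a length-$n$ zero-sum subsequence unconditionally, it suffices to rule out a zero-sum $S$ of length exactly $L=c(n-1)$ with no length-$n$ zero-sum subsequence. Such an $S$ is, in particular, a sequence of length $\ls(G)-1$ avoiding length-$n$ zero-sum subsequences, so Property D forces $S=T^{[n-1]}$ for some $T\in\F(G)$, necessarily with $|T|=c$. Since $S$ is zero-sum and $(n-1)x=-x$ in $\Z_n^r$, we get $0=\sigma(S)=(n-1)\sigma(T)=-\sigma(T)$, so $T$ is itself zero-sum. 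Now convert the gcd hypothesis: as $\gcd(n-1,n)=1$, the condition $\gcd(L,n)=\gcd(c(n-1),n)=\gcd(c,n)=c$ is equivalent to $c\mid n$; and for $G\cong\Z_n^r$ with $r\ge2$ one has $c\ge2$ (indeed $\ls(G)>n$), so $1\le n/c\le n-1$. Then $T^{[n/c]}$ is a genuine subsequence of $S=T^{[n-1]}$ of length $c\cdot(n/c)=n$ and sum $(n/c)\,\sigma(T)=0$, contradicting the choice of $S$. Hence no such $S$ exists and $\m(G)\le L<\ls(G)$.

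I expect the two structural reductions to be the only delicate points. In $(i)$ it is the recognition that adding a fixed element to every term preserves the entire family of length-$n$ zero-sum subsequences, which is what frees the automorphism argument to adjust only the total sum $\sigma$. In $(ii)$ it is extracting the clean divisibility $c\mid n$ from $\gcd(\ls(G)-1,n)=c$ and verifying $n/c\le n-1$, so that the repeated block $T^{[n/c]}$ actually embeds in $T^{[n-1]}$; everything else is bookkeeping with $\sigma$ and the exponent relation $nx=0$.
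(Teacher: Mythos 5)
Your proposal is correct and follows essentially the same route as the paper's own proof: in part $(i)$ the paper performs the identical translation trick, shifting every term by $-th$ where $t\equiv(\ls(G)-1)^{-1}\pmod{{\rm exp}(G)}$ (your ``solve $La=-\sigma(S)$'' step), and in part $(ii)$ it makes the same Property-D reduction to $S=T^{[n-1]}$ with $|T|=c$, $\sigma(T)=0$, and the contradiction via $T^{[n/c]}$. If anything, you supply details the paper leaves implicit, namely that $\gcd(c(n-1),n)=c$ is equivalent to $c\mid n$ and that $c\ge 2$ guarantees $n/c\le n-1$ so that $T^{[n/c]}$ really embeds in $T^{[n-1]}$.
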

\begin{remark}[Zhong~\cite{Z}]\label{rmk:QZ}\
	
	\n $(i)$ If $G\cong \Z^2_n$ with $n$ odd, then $\m(G)=\ls(G)$.
	
	\n $(ii)$ If $G\cong \Z^2_{2^h}$ with $h\geq 2$, then $\m(G)=\ls(G)-1$.
\end{remark}

 In this paper, we prove the following results about $\m_t(I_k)$, where $I_k=[-k,k]$.
\begin{theorem}\label{thm:1} Let $k$ and $t$ be  positive integers.

\n $(i)$ $\m_t(I_k)$ is finite, then every integer in $[1,D(I_k)]$ divides $t$.

\n $(ii)$ If every integer in $[1,D(I_k)]$ divides $t$, then 
\[t+k(k-1)\leq  \m_t(I_k)\leq t+(2k-2)(2k-3).\]   
\end{theorem}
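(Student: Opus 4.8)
The plan is to treat part (i) and the two bounds of part (ii) separately, since each rests on a different idea.

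\medskip
\noindent\emph{Part (i).} The plan is to prove the contrapositive: if some integer in $[1,D(I_k)]$ does not divide $t$, I will build arbitrarily long sequences in $\B(I_k)$ with no zero-sum subsequence of length $t$, so that $\m_t(I_k)=\infty$. First I reduce to prime powers: ``every integer in $[1,D]$ divides $t$'' is equivalent to $\mathrm{lcm}(1,\ldots,D)\mid t$, hence equivalent to ``every prime power $q=p^e\le D$ divides $t$''; so if the hypothesis fails there is a prime power $q\le D(I_k)=2k-1$ with $q\nmid t$. For such a $q$ I construct a two-valued block. I choose coprime $a,b\in[1,k]$ with $a+b=q$: for $q\le k+1$ take $a=1,\ b=q-1$, while for $k+2\le q\le 2k-1$ the interval $[q-k,k]$ has length $2k-q\ge 1$, hence contains two consecutive integers, one of which (call it $a$) is not divisible by $p$; then $\gcd(a,q)=1$ and $b=q-a\in[1,k]$. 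The sequence $U=a^{[b]}\cdot(-b)^{[a]}$ is a minimal zero-sum sequence over $I_k$ of length $q$, and any zero-sum subsequence $a^{[x]}\cdot(-b)^{[y]}$ of $U^{[m]}$ satisfies $ax=by$, forcing $(x,y)=(bs,as)$ and length $qs$. Thus every zero-sum subsequence of $U^{[m]}$ has length divisible by $q$; since $q\nmid t$ none has length $t$, while $|U^{[m]}|=qm\to\infty$.

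\medskip
\noindent\emph{Part (ii), lower bound.} The key device is a complementation identity: since $S$ is zero-sum, a subsequence $T$ is zero-sum if and only if $S\cdot T^{-1}$ is, so $S$ has a zero-sum subsequence of length $t$ exactly when it has one of length $|S|-t$. Taking $|S|=t+k(k-1)-1$, I must instead exhibit a zero-sum $S$ with no zero-sum subsequence of co-length $c:=k(k-1)-1$. The arithmetic fact I will use is that $c=k(k+1)-k-(k+1)$ is the Frobenius number of the coprime pair $\{k,k+1\}$. For $k\ge 2$ (the case $k=1$ being trivial) I set $S=(k-1)^{[a]}\cdot k^{[b]}\cdot(-1)^{[(k-1)a+kb]}$, which is zero-sum over $I_k$, and I pick $a,b\ge 0$ with $ka+(k+1)b=t+c$; this is solvable because $t+c$ exceeds the Frobenius number. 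Every zero-sum subsequence of $S$ uses $x$ copies of $k-1$, $y$ copies of $k$, and a forced $z=(k-1)x+ky$ copies of $-1$, so its length equals $kx+(k+1)y$, a nonnegative combination of $k$ and $k+1$, i.e.\ an element of the numerical semigroup $\langle k,k+1\rangle$. Since $c\notin\langle k,k+1\rangle$, no zero-sum subsequence has length $c$, hence none has length $t$, giving $\m_t(I_k)\ge t+k(k-1)$.

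\medskip
\noindent\emph{Part (ii), upper bound.} Write $D=D(I_k)=2k-1$, so the target slack is $(2k-2)(2k-3)=(D-1)(D-2)$. Given $S\in\B(I_k)$ with $|S|\ge t+(D-1)(D-2)$, I factor $S=U_1\cdots U_r$ into minimal (atomic) zero-sum subsequences; by definition of the Davenport constant each length $\ell_i=|U_i|$ lies in $[1,D]$, and each such length divides $t$ by hypothesis. A sub-collection of atoms whose lengths sum to $t$ concatenates to a zero-sum subsequence of length exactly $t$, so everything reduces to a subset-sum lemma: if $\ell_1,\ldots,\ell_r\in[1,D]$ each divide $t$ and $\sum_i\ell_i\ge t+(D-1)(D-2)$, then some sub-collection sums to $t$.

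\medskip
\noindent\emph{Where the difficulty lies.} To prove the lemma I choose a sub-collection $A$ whose length-sum $s$ is maximal subject to $s\le t$, and set $g=t-s$. If $g=0$ we are done; otherwise every unused atom has length exceeding $g$ (else it could be added), so $1\le g\le D-1$, and the unused atoms form a reservoir of total length $\ge(D-1)(D-2)$ with all lengths in $[g+1,D]$. It remains to correct the deficit by \emph{exactly} $g$ via a swap—deleting some atoms of $A$ and inserting unused ones so that the net length change is $g$—using the full divisibility of $t$ together with the abundance guaranteed by the slack, which is precisely one more than the Frobenius number of $\{D-1,D\}$. I expect this exact-correction step to be the main obstacle: reaching within $D-1$ of $t$ is routine, but hitting $t$ on the nose is exactly what forces both the strong divisibility hypothesis on $t$ and the quadratic slack $(D-1)(D-2)$, and closing it will require a careful case analysis (or an auxiliary lemma stating that subset sums of integers bounded by $D$ realize every multiple of their gcd throughout a central interval).
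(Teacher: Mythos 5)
Your part (i) and your lower bound in part (ii) are both correct and complete. For (i), the paper splits the problem into odd divisors and powers of $2$ (its Lemma~\ref{lem:3.2} and Case~1 of the Theorem's proof) and recombines by coprimality, while you reduce at once to prime powers $q=p^e\le 2k-1$ and build a coprime pair $a+b=q$ with $a,b\in[1,k]$; both rest on the same two-valued-block principle (the paper's Lemma~\ref{lem:3.1}), and your existence argument for the pair (two consecutive integers in $[q-k,k]$, one of them prime to $p$) is sound. For the lower bound, the paper's sequence $U^{[t/(k+1)-1]}\cdot V^{[k]}$ with $U=k\cdot(-1)^{[k]}$ and $V=(k-1)\cdot(-1)^{[k-1]}$ is a particular member of your family $(k-1)^{[a]}\cdot k^{[b]}\cdot(-1)^{[(k-1)a+kb]}$; your observation that the zero-sum subsequence lengths lie in the numerical semigroup $\langle k,k+1\rangle$ and that $k(k-1)-1$ is exactly its Frobenius number is cleaner than the paper's ad hoc contradiction, and it does not even need the divisibility hypothesis that the paper's specific exponents $t/(k+1)-1$ and $t/k-1$ require.

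The upper bound, however, is where the substance of the theorem lies, and your proposal stops precisely at the decisive step, as you yourself concede. Your reduction is the right one (it is also the paper's): factor $S$ into atoms, so that it suffices to show that a multiset of lengths $\ell_i\in[1,D]$, each dividing $t$, with total $\ge t+(D-1)(D-2)$, has a sub-multiset summing to $t$. The greedy step giving a maximal $s\le t$ with gap $g\le D-1$ is routine, but the ``swap'' correcting the deficit by exactly $g$ is not established, and nothing in your setup makes it work: every unused atom has length $>g$, so no single insertion closes the gap; $g$ itself need not be divisible by any of the available lengths, so the divisibility of $t$ gives no direct handle on it; and multi-atom exchanges are exactly the uncontrolled case analysis you defer. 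The appeal to the Frobenius number of $\{D-1,D\}$ is only heuristic, since the lengths are not confined to the two values $D-1$ and $D$.

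The missing idea --- what the paper's Lemmas~\ref{lem:3.5}--\ref{lem:3.7} supply --- is to work modulo a single \emph{abundant} length rather than to swap. Since $|S|\ge t+k(k-1)$, there are at most $D$ distinct lengths, and all of them are at most $D$ (here the full hypothesis is used again, to know $t$ is large), pigeonhole yields one length $\beta$ with multiplicity $n_\beta\ge\alpha-1$, where $\alpha=\max_\ell\ell$ (Lemma~\ref{lem:3.6} and Remark~\ref{rmk:1}). The Erd\H{o}s--Ginzburg--Ziv-type fact (Lemma~\ref{lem:3.5}) applied to the remaining lengths with modulus $\beta$ groups them, up to a leftover $X_0$ with $|X_0|\le\beta-1$, into blocks $X_1,\ldots,X_r$ with $|X_j|\le\beta$, $\beta\mid\sigma(X_j)$, and $\sigma(X_j)\le\alpha\beta$. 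Now every deficit encountered while assembling blocks toward $t$ is a multiple of $\beta$ (because $\beta\mid t$), and whenever a block no longer fits the deficit is at most $(\alpha-1)\beta$, hence payable exactly by at most $\alpha-1$ copies of $\beta$; this is what converts ``within $D-1$ of $t$'' into ``exactly $t$''. If even all blocks plus the whole $\beta$-pile fall short of $t$, one gets $|S|\le t-\beta+\sigma(X_0)\le t-\beta+(\beta-1)\max_{\ell\ne\beta}\ell\le t+(2k-2)(2k-3)-1$, which is the claimed bound. Until you prove your subset-sum lemma by this congruence-class grouping or some equivalent device, the upper bound --- and hence part (ii) --- remains open in your write-up.
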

\begin{corollary}\label{cor:1} Let $t\in\N$ and $k\in[1,3]$. Then $\m_t(I_k)= t+k(k-1)$ if and only if  every integer in $[1,D(I_k)]$ divides $t$. 
\end{corollary}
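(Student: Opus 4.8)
The plan is to read off both directions from Theorem~\ref{thm:1}, with essentially all the work concentrated in sharpening the upper bound in the case $k=3$. For the forward implication, suppose $\m_t(I_k)=t+k(k-1)$. Since this quantity is finite, Theorem~\ref{thm:1}$(i)$ immediately forces every integer in $[1,D(I_k)]$ to divide $t$, and nothing more is needed. For the converse, assume every integer in $[1,D(I_k)]$ divides $t$, so that Theorem~\ref{thm:1}$(ii)$ applies and yields $t+k(k-1)\le \m_t(I_k)\le t+(2k-2)(2k-3)$. When $k=1$ one has $k(k-1)=(2k-2)(2k-3)=0$, and when $k=2$ one has $k(k-1)=(2k-2)(2k-3)=2$; in both cases the two bounds coincide and give $\m_t(I_k)=t+k(k-1)$ with no further argument.

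The entire difficulty therefore lies in the case $k=3$, where $D(I_3)=5$, the hypothesis reads $60=\mathrm{lcm}(1,2,3,4,5)\mid t$, and Theorem~\ref{thm:1}$(ii)$ only gives $t+6\le \m_t(I_3)\le t+12$; I must improve the upper bound to $t+6$. I would fix a zero-sum sequence $S\in\B(I_3)$ with $|S|=n\ge t+6$ and aim to produce a zero-sum subsequence of length exactly $t$; equivalently, passing to the complementary subsequence, it suffices to extract a zero-sum subsequence of length $n-t$. The natural starting point is to factor $S$ into minimal zero-sum blocks, $S=B_1\cdots B_m$, which by Theorem~\ref{thm:La} all have length $|B_i|\in\{1,2,3,4,5\}$. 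Any sub-collection of these blocks is again zero-sum, so it is enough to find an index set $J$ with $\sum_{i\in J}|B_i|=t$.

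I would first attempt this selection by a greedy/residue argument. Because $60\mid t$, every admissible block length divides $t$, and likewise the gcd of the occurring block lengths (a divisor of $60$) divides $t$; hence divisibility is never the obstruction, and the slack $n-t\ge 6$ together with $60\mid t$ settles all but finitely many residue configurations. The genuine obstacle — and the reason the bound in Theorem~\ref{thm:1}$(ii)$ is weaker — is that a factorization using only the long blocks (lengths $4$ and $5$) can fail to represent certain small values of $n-t$, for instance $n-t\in\{6,7,11\}$, since these are not of the form $4a+5b$. In exactly these exceptional configurations I would abandon the fixed factorization and instead locate a zero-sum subsequence crossing block boundaries: for example, the length-$5$ block $2\cdot2\cdot2\cdot(-3)\cdot(-3)$ and the length-$4$ block $1\cdot1\cdot1\cdot(-3)$ jointly contain the length-$6$ zero-sum subsequence $2\cdot2\cdot(-3)\cdot1\cdot1\cdot(-3)$. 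The crux of the proof is thus a finite but delicate case analysis over the possible multisets of block lengths, showing that whenever the block-aligned subset-sum misses $t$ one can always recombine a bounded number of terms across blocks to recover a zero-sum subsequence of the required length. Once this is carried out, $\m_t(I_3)\le t+6$ matches the lower bound from Theorem~\ref{thm:1}$(ii)$ and completes the corollary, and I expect this crossing-subsequence analysis to be the main and most technical step.
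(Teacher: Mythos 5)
Your high-level structure is exactly the paper's: the forward implication from Theorem~\ref{thm:1}$(i)$, the cases $k\in\{1,2\}$ from the coinciding bounds in Theorem~\ref{thm:1}$(ii)$, and, for $k=3$, factoring $S$ into minimal zero-sum blocks, attempting a block-aligned sub-sum equal to $t$, and recombining terms across blocks in the exceptional configurations (your length-$6$ crossing example inside $2^{[3]}\cdot(-3)^{[2]}$ and $1^{[3]}\cdot(-3)$ is correct, and it is the same kind of move the paper makes). The problem is that your argument stops exactly where the content begins: for $k=3$ the corollary \emph{is} the claim that ``whenever the block-aligned subset-sum misses $t$ one can always recombine a bounded number of terms across blocks,'' and you assert this rather than prove it. Carrying it out requires tools your outline never identifies: (a) Lemma~\ref{lem:3.7} (built on the EGZ-type Lemma~\ref{lem:3.5}), giving $|S|\le t-\beta+(\beta-1)\max_{\ell\in L\setminus\{\beta\}}\ell$ whenever some block length $\beta$ occurs at least $\alpha-1$ times; this is what eliminates factorizations with $\alpha\le4$ and any factorization with four or more blocks of length $1$ or $2$; (b) the classification, up to sign, of minimal zero-sum sequences over $[-3,3]$ (length $3$: $2\cdot(-1)^{[2]}$, $3\cdot(-2)\cdot(-1)$; length $4$: $3\cdot(-1)^{[3]}$, $3\cdot1\cdot(-2)^{[2]}$; length $5$: only $3^{[2]}\cdot(-2)^{[3]}$), which is what turns ``inspect crossing subsequences'' into a finite check; (c) the sign-exclusion step: if a zero-sum $R$ with $|R|\ge4$ occurs in $S$ together with $-R$, then $R\cdot(-R)$ refactors into at least four length-$2$ blocks and (a) forces $|S|<t+5$; without this you cannot control which length-$4$ and length-$5$ blocks coexist in one sequence, and the case inspection does not close; and (d) complement-counting for the residual shapes: any zero-sum subsequence of length $d$ rules out $|S|=t+d$, combined with $n_\ell\le t/\ell-1$ and divisibility by $5$. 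These are Claims~1--3 and Cases~1--4 in the paper's proof, i.e.\ essentially all of it.

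A secondary inaccuracy: your claim that the slack $n-t\ge6$ together with $60\mid t$ ``settles all but finitely many residue configurations'' is misleading, because the hard configurations are neither finite in number nor residue obstructions. For every $t$ divisible by $60$, the shape $S_L=5^{[t/5-1]}\cdot4^{[3]}$ has total length exactly $t+7$ and admits no block-aligned sub-sum equal to $t$: one would need $4a+5b=t$ with $a\le3$ and $b\le t/5-1$, and $5\mid t$ forces $5\mid 4a$, hence $a=0$ and $b=t/5>t/5-1$, a contradiction. So for infinitely many $t$ (indeed for every admissible $t$) there are zero-sum sequences of length in $[t+6,t+11]$ for which the block-aligned method fails outright, and each such family must be dispatched by the structural crossing analysis above. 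Until that analysis is written down, the upper bound $\m_t(I_3)\le t+6$, and with it the corollary for $k=3$, remains unproved.
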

\begin{conj}\label{conj:1} Corollary~\ref{cor:1} 
holds for any $k\in\N$.   
\end{conj}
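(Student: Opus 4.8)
The equivalence in Corollary~\ref{cor:1} splits into two implications. The ``only if'' direction is immediate: if $\m_t(I_k)=t+k(k-1)$, then $\m_t(I_k)$ is finite, so Theorem~\ref{thm:1}$(i)$ forces every integer in $[1,D(I_k)]$ to divide $t$. For the ``if'' direction, assume every integer in $[1,D(I_k)]$ divides $t$. Theorem~\ref{thm:1}$(ii)$ already supplies the lower bound $\m_t(I_k)\ge t+k(k-1)$, so the entire content of the conjecture is the matching \emph{upper} bound
\[\m_t(I_k)\le t+k(k-1),\]
that is, a sharpening of the bound $t+(2k-2)(2k-3)=t+(D(I_k)-1)(D(I_k)-2)$ from Theorem~\ref{thm:1}$(ii)$ down to $t+k(k-1)$. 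The plan is therefore devoted entirely to this inequality.

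To prove the upper bound I would start from a zero-sum sequence $S\in\B(I_k)$ with $|S|\ge t+k(k-1)$ and seek a zero-sum subsequence of length exactly $t$. The basic reduction is to factor $S$ into minimal zero-sum blocks $S=B_1\cdot\ldots\cdot B_m$, each with $|B_i|\in[1,D(I_k)]$ by definition of the Davenport constant, and then to realize $t$ as a sub-sum $\sum_{i\in A}|B_i|=t$ of the block lengths; the associated $\prod_{i\in A}B_i$ is then a zero-sum subsequence of length $t$. Since every integer in $[1,D(I_k)]$ divides $t$, in particular $\gcd(|B_1|,\ldots,|B_m|)$ divides $t$, and this is a bounded-multiplicity subset-sum (Frobenius-type) problem whose crude solution already yields the $(D(I_k)-1)(D(I_k)-2)$ slack. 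The point is that a zero-sum sequence over $I_k$ carries more structure than a generic coin system: the term $0$ is a block of length $1$, each opposite pair $a\cdot(-a)$ is a block of length $2$, and the long minimal blocks are rigid (for instance $k^{[k-1]}\cdot(-(k-1))^{[k]}$ has length $D(I_k)$ but admits only $\{0,D(I_k)\}$ as zero-sum subsequence lengths). I would first catalogue the minimal zero-sum sequences over $I_k$ together with \emph{exchange moves} that trade one long block for several shorter ones without changing the underlying multiset, so that the block lengths may be chosen as flexibly as the multiset of $S$ permits.

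The heart of the argument is then a sharp reachability lemma: whenever the multiset of available block lengths has total $\ge t+k(k-1)$, is drawn from $[1,D(I_k)]$, and $t$ satisfies the divisibility hypothesis, some sub-collection sums to exactly $t$, and the slack $k(k-1)$ is best possible. I would prove this by induction on $k$, peeling off the extreme value: terms equal to $\pm k$ must ultimately be cancelled, either against each other through cheap length-$2$ blocks or by runs of smaller terms, and once they are removed the problem descends to $I_{k-1}$, whose reachability slack $(k-1)(k-2)$ is smaller by exactly $2(k-1)$; tracking this increment should reproduce $k(k-1)=\sum_{j=1}^{k-1}2j$ and furnish the inductive step, with $k\le 3$ (Corollary~\ref{cor:1}) serving as the base. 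The main obstacle, and the reason the statement remains a conjecture, is the case in which $S$ has almost no short blocks and is forced into the rigid long blocks: there the subset-sum steps are coarse (of size close to $D(I_k)$) and one must show that the $k(k-1)$ slack still prevents overshooting $t$. Identifying this worst case with the extremal configuration behind the lower bound of Theorem~\ref{thm:1}$(ii)$, and verifying that no other configuration is worse, is the delicate step that the proofs for $k\le 3$ dispatch by hand but that resists a uniform treatment.
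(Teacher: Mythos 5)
This statement is a \emph{conjecture} in the paper: the authors prove it only for $k\le 3$ (Corollary~\ref{cor:1}) and otherwise merely remark that a proof was later announced by Berger~\cite{AB}. Your text, to its credit, correctly isolates the real content (the upper bound $\m_t(I_k)\le t+k(k-1)$, since the lower bound and the ``only if'' direction follow from Theorem~\ref{thm:1}), but it is a research plan, not a proof, and it contains a genuine gap that it candidly admits: the ``sharp reachability lemma'' you invoke --- that any multiset of block lengths of total $\ge t+k(k-1)$ drawn from $[1,D(I_k)]$ admits a sub-collection summing to exactly $t$ --- is never proved, and as stated over block lengths alone it is \emph{false}: the extremal sequence of Lemma~\ref{lem:3.0} factors into blocks of lengths $k+1$ and $k$ with total $t+k(k-1)-1$, and no sharpened count of lengths by itself distinguishes it from longer sequences; any correct argument must exploit how distinct minimal zero-sum sequences over $I_k$ can be \emph{refactored} against each other, which is exactly what the paper's $k=3$ proof does by classifying all minimal zero-sum sequences over $I_3$ (the sequences $A$, $B_1$, $B_2$, $C_1$, $C_2$) and running Claims~1--3. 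You gesture at this with ``exchange moves'' but supply none, and no such catalogue is available for general $k$.

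The proposed induction on $k$ also has a concrete obstruction beyond incompleteness. If you peel off blocks containing the terms $\pm k$ (say of total length $s$) so as to descend to a sequence over $I_{k-1}$, the residual problem asks for a zero-sum subsequence of length $t-s$; but the inductive hypothesis for $I_{k-1}$ requires every integer in $[1,2k-3]$ to divide the \emph{target length}, and $t-s$ has no reason to satisfy this (indeed $s$ is typically not divisible by $\mathrm{lcm}(1,\dots,2k-3)$). Moreover ``cancelling $k$'s by runs of smaller terms'' does not produce a sequence over $I_{k-1}$: the smaller terms consumed in that cancellation are entangled with the $\pm k$'s inside blocks such as $k\cdot(-1)^{[k]}$, so the residual multiset is not a zero-sum sequence over $I_{k-1}$ of controlled length. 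Tracking the increment $2(k-1)$ in the slack is therefore not a bookkeeping matter but the entire difficulty; your plan correctly names the worst case (the rigid configuration of Lemma~\ref{lem:3.0}) yet offers no mechanism for it, which is precisely why the statement stood as a conjecture.
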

\section{Proofs of the main results}\label{sec:main}
For any integers $a$ and $b$, we denote $\gcd(a,b)$ by $(a,b)$. We use the abbreviations \zs and \zsb 
for {\em zero-sum sequence(s)} and {\em zero-sum subsequence(s)}, respectively. The letters $k$ and $t$ will denote positive integers throughout the paper. 

\bs\n The following lemma gives a lower bound for $\m_t(I_k)$.
\begin{lemma}\label{lem:3.0}
Consider the \zs $U=k\cdot(-1)^{[k]}$ and $V=(k-1)\cdot(-1)^{[k-1]}$.
Then, $S=U^{[\frac{t}{k+1}-1]}\cdot V^{[k]}$ and $R=U^{[k-1]}\cdot V^{[\frac{t}{k}-1]}$ are \zs that do not contain a \zsb of 
length $t$. Thus, $\m_t(I_k)\geq t+k(k-1)$.
\end{lemma}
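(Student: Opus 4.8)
The plan is to use $S$ and $R$ as explicit witnesses: each will have length exactly $t+k(k-1)-1$, and once we check that neither contains a \zsb of length $t$, this forces $\m_t(I_k)>t+k(k-1)-1$, i.e. $\m_t(I_k)\ge t+k(k-1)$. First I would record that $U$ and $V$ are genuine \zs over $I_k$: indeed $k,\,k-1,\,-1\in I_k$, while $\sigma(U)=k+k(-1)=0$ and $\sigma(V)=(k-1)+(k-1)(-1)=0$; hence $S$ and $R$, being products of copies of $U$ and $V$, lie in $\B(I_k)$. A direct count using $|U|=k+1$ and $|V|=k$ gives
\[
|S|=(k+1)\Big(\tfrac{t}{k+1}-1\Big)+k\cdot k=t+k(k-1)-1,
\]
and likewise $|R|=(k+1)(k-1)+k\big(\tfrac{t}{k}-1\big)=t+k(k-1)-1$. (For the exponents to be non-negative integers one needs $(k+1)\mid t$ for $S$ and $k\mid t$ for $R$; either witness alone already yields the bound.)

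The core of the argument is a reduction to a single linear Diophantine equation. Every term of $S$, and of $R$, equals one of the three values $k$, $k-1$, $-1$, so any subsequence $W$ is determined up to order by the numbers $x,y,z$ of copies of $k$, $k-1$, $-1$ it uses. Then $W$ is zero-sum exactly when $kx+(k-1)y-z=0$, i.e. $z=kx+(k-1)y$, and $|W|=t$ exactly when $x+y+z=t$; eliminating $z$ collapses both conditions into
\[
(k+1)x+ky=t.
\]
The admissibility constraints are $x,y\ge 0$ together with the multiplicity bounds from the construction ($x\le\tfrac{t}{k+1}-1$ and $y\le k$ for $S$; $x\le k-1$ and $y\le\tfrac{t}{k}-1$ for $R$). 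One checks that the bound on the number of available $-1$'s is automatically implied by the bounds on $x$ and $y$, so it can be dropped from the analysis.

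It then remains to see that $(k+1)x+ky=t$ has no solution inside the relevant box. For $S$, write $t=(k+1)(a+1)$ with $a=\tfrac{t}{k+1}-1$, so the equation becomes $(k+1)(a+1-x)=ky$; since $(k,k+1)=1$ this forces $k\mid(a+1-x)$, and a short case split (according as $a+1-x$ equals $0$, is at least $k$, or is negative) contradicts $x\le a$, $y\le k$, or $y\ge 0$, respectively. For $R$, write $t=k(b+1)$ with $b=\tfrac{t}{k}-1$; the equation becomes $(k+1)x=k(b+1-y)$, whence $k\mid x$, and then either $x=0$ gives $y=b+1>b$ or $x>0$ gives $x\ge k>k-1$ --- a contradiction in both cases. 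Hence neither $S$ nor $R$ contains a \zsb of length $t$, and $\m_t(I_k)\ge t+k(k-1)$ follows.

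I expect the only real obstacle to be bookkeeping rather than ideas: keeping the three multiplicities straight, confirming that the $(-1)$-availability bound is subsumed by the bounds on $x$ and $y$, and checking the degenerate small cases --- in particular $k=1$, where $V$ is the single term $0$ and $R$ degenerates to a string of $t-1$ zeros --- so that the uniform argument continues to apply.
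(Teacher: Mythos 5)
Your proposal is correct: the witnesses, the length count, the reduction to $(k+1)x+ky=t$, the observation that the $(-1)$-availability bound is subsumed, and both case analyses all check out, including the degenerate case $k=1$. It is worth noting, though, that your route differs from the paper's in one structural point, and the two are in fact dual to each other. The paper never analyzes the hypothetical length-$t$ zero-sum subsequence directly; instead it uses $\sigma(S)=0$ to pass to the \emph{complementary} zero-sum subsequence $S'$, of fixed small length $|S|-t=k(k-1)-1$, writes $S'=k^{[a]}\cdot(k-1)^{[b]}\cdot(-1)^{[c]}$, and derives $(a+1)(k+1)=k(k-b)$, which is impossible because $\gcd(k,k+1)=1$ and $0<k-b\leq k$. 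The advantage of that complementation step is that all of your box constraints (the upper bounds $x\leq \frac{t}{k+1}-1$, $y\leq k$, and the $-1$-availability bound you had to argue is subsumed) get converted automatically into mere non-negativity of $a,b,c$, so the contradiction needs no case split on the size of $a+1-x$ and no bookkeeping of multiplicity ceilings. Indeed, under the complementary substitution $x=\left(\frac{t}{k+1}-1\right)-a$, $y=k-b$, your equation $(k+1)x+ky=t$ becomes exactly the paper's $(a+1)(k+1)=k(k-b)$, so the underlying arithmetic is identical; what your version buys instead is full explicitness --- both $S$ and $R$ are treated in detail (the paper dismisses $R$ as ``similar''), and the degenerate case $k=1$ is checked --- at the cost of the extra constraint-tracking that the complement trick is designed to avoid.
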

\begin{proof}
We prove the lemma for $S$ only since the proof 
for  $R$ is similar. By contradiction, assume that  $S$ contains a \zsb of length $t$. Since
$\sigma(S)=0$, it follows that $S$ also contains
a \zsb $S'$ of length $|S|-t=k(k-1)-1$.
Moreover, $S'$ can be written as $S'=k^{[a]}\cdot (k-1)^{[b]}\cdot (-1)^{[c]}$ for some nonnegative integers $a$, $b$, and $c$. Hence 
$\sigma(S')=ak+b(k-1)-c=0$ and $a+b+c=|S'|=k^2-k-1$. Thus  
\[(a+1)(k+1)=k(k-b).\]
Since $a,b,k\geq 0$, we have $0<k-b\leq k$. Since $(k,k+1)=1$, we obtain that $k+1$ divides $k-b$, which is a contradiction.
Thus $\m_t(I_k)\geq |S|+1=t+k(k-1)$. 
\end{proof}
\begin{example} For $k=3$, $S =\left(3\cdot-1\cdot-1\cdot-1\right)^{[14]} \cdot \left(2\cdot-1\cdot-1\right)^{[3]}$ is a \zs of length 
$65$ over $[-3,3]$ which does not contain a \zsb  of length $t=60$. 
\end{example}
\begin{lemma}\label{lem:3.1} Let $a,b,x\in\N$.
If $S=a^{[\frac{b}{(a,b)}]}\cdot (-b)^{[\frac{a}{(a,b)}]}$ is a z.s.s, then the length of any \zsb of $S^{[x]}$ is a multiple of 
$\vert S\vert$.
\end{lemma}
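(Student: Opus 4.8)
The plan is to analyze directly the structure of an arbitrary zero-sum subsequence of $S^{[x]}$, exploiting the fact that $S$ uses only the two values $a$ and $-b$. First I would set $d=(a,b)$ and write $a=da'$ and $b=db'$, so that $(a',b')=1$ and $|S|=\frac{b}{d}+\frac{a}{d}=a'+b'$. (Note that $S$ is automatically zero-sum, since $\sigma(S)=\frac{b}{d}\cdot a+\frac{a}{d}\cdot(-b)=0$.) Since every term of $S^{[x]}$ equals either $a$ or $-b$, any subsequence $T$ of $S^{[x]}$ has the form $T=a^{[p]}\cdot(-b)^{[q]}$ for some integers $p,q$ with $0\le p\le \frac{xb}{d}$ and $0\le q\le \frac{xa}{d}$.

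Next I would impose the zero-sum condition on $T$. We have $\sigma(T)=pa-qb=0$, and dividing through by $d$ yields the equation $pa'=qb'$. The key step is to use coprimality: because $(a',b')=1$, the relation $pa'=qb'$ forces $a'\mid q$. Writing $q=ma'$ for some nonnegative integer $m$ and substituting back gives $pa'=ma'b'$, hence $p=mb'$. Therefore
\[
|T|=p+q=mb'+ma'=m(a'+b')=m\,|S|,
\]
so the length of every \zsb of $S^{[x]}$ is a multiple of $|S|$, as claimed.

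There is no serious obstacle here; the argument reduces to a short divisibility computation. The only point demanding care is the passage to the coprime pair $(a',b')$, which is precisely what converts the linear condition $pa=qb$ into the desired statement that $|T|$ is an integer multiple of $|S|=a'+b'$.
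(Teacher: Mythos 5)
Your proof is correct and is essentially the same argument as the paper's: you write an arbitrary zero-sum subsequence as $a^{[p]}\cdot(-b)^{[q]}$, divide the relation $pa=qb$ by $(a,b)$, and use the coprimality of $a/(a,b)$ and $b/(a,b)$ to force $p$ and $q$ to be the appropriate multiples, giving $|T|=m|S|$. The only cosmetic difference is that the paper introduces separate parameters for both exponents and then shows they are equal, whereas you solve for one exponent in terms of the other directly.
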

\begin{proof}  
Let $S'$ be a \zsb of $S^{[x]}$. Since the terms of $S$ are $a$ 
and $-b$, there exist nonnegative integers $h$ and $r$ such that 
 $S'=a^{h}\cdot (-b)^{r}$ and
\begin{equation}\label{1}
\sigma(S')=ha-rb=0\Rightarrow h\dfrac{a}{(a,b)}=r\dfrac{b}{(a,b)}.
\end{equation}
Since $\left(\frac{b}{(a,b)},\frac{b}{(a,b)}\right)=1$, we obtain $\frac{b}{(a,b)}$ divides $h$ and $\frac{a}{(a,b)}$
divides $r$.  Thus, $h=p\frac{b}{(a,b)}$ and $r=q\frac{a}{(a,b)}$ for some integers $p$ and $q$.  Substituting $h$ and $r$ back into~\eqref{1} yields $p=q$. Thus,
\[|S'|=h+r=p\frac{b}{(a,b)}+q\frac{a}{(a,b)}=p|S|.\]
\end{proof}
\begin{lemma}\label{lem:3.2}  If $\m_t(I_k)$ is finite, then every odd integer in $[1,D(I_k)]$ divides $t$.
\end{lemma}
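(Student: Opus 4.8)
The plan is to prove the contrapositive: if some odd integer $a\in[1,D(I_k)]$ fails to divide $t$, then I would exhibit arbitrarily long \zs over $I_k$ that contain no \zsb of length $t$, which forces $\m_t(I_k)=\iy$. Since $1$ divides every $t$, I may assume $a\ge 3$; then $a\le D(I_k)=\max\{2,2k-1\}$ forces $D(I_k)=2k-1$, so that $3\le a\le 2k-1$. Writing $a=2m-1$ yields an integer $m$ with $2\le m\le k$.

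The key construction is the \zs
\[W=m^{[m-1]}\cdot\big(-(m-1)\big)^{[m]},\]
whose terms $m$ and $-(m-1)$ both lie in $I_k$ (here I use $2\le m\le k$, so that $1\le m-1\le k-1$), whose sum is $m(m-1)-(m-1)m=0$, and whose length is $(m-1)+m=2m-1=a$. This $W$ is precisely the sequence furnished by Lemma~\ref{lem:3.1} with the roles of its $a,b$ played by $m$ and $m-1$: since $(m,m-1)=1$, the sequence $m^{[(m-1)/(m,m-1)]}\cdot(-(m-1))^{[m/(m,m-1)]}$ is exactly $W$.

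Applying Lemma~\ref{lem:3.1}, every \zsb of $W^{[x]}$ has length divisible by $|W|=a$. For any target length $\ell$ I then take $x$ large enough that $|W^{[x]}|=ax\ge\ell$: the sequence $W^{[x]}$ is a \zs over $I_k$ of length at least $\ell$, and since $a\nmid t$ the integer $t$ is not a multiple of $a$, so $W^{[x]}$ has no \zsb of length $t$. As $\ell$ is arbitrary, this shows $\m_t(I_k)=\iy$, completing the contrapositive.

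I do not anticipate a serious obstacle, as the argument is essentially reduced to Lemma~\ref{lem:3.1}; the only points requiring care are the bookkeeping that $W$ genuinely lies over $I_k$ (which is exactly where oddness and the bound $a\le 2k-1$ enter, via $m\le k$) and the trivial disposal of $a=1$. The one genuinely substantive idea is the choice of the minimal \zs $W$ of odd length $a$ built from the adjacent values $m$ and $-(m-1)$, so that Lemma~\ref{lem:3.1} pins \emph{all} \zsb lengths to multiples of $a$ itself, rather than to some larger quantity that might accidentally divide $t$.
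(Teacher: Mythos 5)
Your proof is correct and is essentially the paper's own argument: after disposing of $a=1$ (equivalently $k=1$), you construct the same minimal zero-sum sequence $m^{[m-1]}\cdot(-(m-1))^{[m]}$ of odd length $a=2m-1$ (the paper writes it as $c^{[c-1]}\cdot(-c+1)^{[c]}$ with $\ell=2c-1$) and invoke Lemma~\ref{lem:3.1} on its powers to conclude that all zero-sum subsequence lengths are multiples of $a$, hence never equal to $t$. Your reading of Lemma~\ref{lem:3.1} (lengths are multiples of $|W|$) is in fact the correct one, whereas the paper's phrasing ``$|R|$ divides $|S|$'' has the divisibility reversed by a slip.
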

\begin{proof}
Since the lemma is trivial for $k=1$, we assume that 
$k\geq 2$. Then
$D(I_k)=2k-1$ by Theorem~\ref{thm:La}. 
Let $\ell=2c-1$ be an odd integer in $[3,D(I_k)]$, and 
consider the minimal \zs $S=c^{[c-1]}\cdot (-c+1)^{[c]}$. 
Then, for any $x\in\N$, it follows
from Lemma~\ref{lem:3.1} that for any \zsb $R$ of $S^{[x]}$, 
$|R|$ divides $|S|=2c-1=\ell$. Since 
$\ell$ does not divide $t$, there is no \zsb of 
$S^{[x]}$ whose length is equal to $t$. Since $x$ is arbitrary, it follows that $\m_t(I_k)$ can be arbitrarily large.
This proves the lemma.  
\end{proof}

\bs To prove the upper bound in Theorem~\ref{thm:1}$(ii)$, we will use the following lemma which is a directly application 
a well-known fact: ``Any sequence of $n$ integers contains a nonempty subsequence whose sum is divisible by $n$''.
\begin{lemma}\label{lem:3.5}
Let $\beta\in\N$ and $X\in\F(\Z)$. If $|X|\geq \beta$, then there exists a factorization 
$X=X_0\cdot X_1\cdot\ldots\cdot X_r$ such that 
\begin{enumerate}
\item[(i)]  $|X_0|\leq \beta-1$ and no subsequence of $X_0$ 
has a sum that is divisible by $\beta$.
\item[(ii)] $|X_j|\leq \beta$ and  $\sigma(X_j)$ is divisible by $\beta$ for any $j\in[1,r]$.
\end{enumerate}
\end{lemma}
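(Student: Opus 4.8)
The plan is to build the factorization greedily, repeatedly peeling off short subsequences whose sum is divisible by $\beta$ until what remains can contain no such subsequence at all. Concretely, I would prove the following slightly stronger statement by strong induction on $|X|$: every $X\in\F(\Z)$ admits a factorization $X=X_0\cdot X_1\cdot\ldots\cdot X_r$ in which $X_0$ has \emph{no} nonempty subsequence with sum divisible by $\beta$, and each $X_j$ with $j\ge1$ satisfies $1\le |X_j|\le\beta$ and $\beta\mid\sigma(X_j)$. Once this is established, the length bound $|X_0|\le\beta-1$ required in (i) comes for free: by the quoted fact, any sequence of $\beta$ integers has a nonempty subsequence with sum divisible by $\beta$, so its contrapositive forces $|X_0|\le\beta-1$.

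For the inductive step, if $X$ itself has no nonempty subsequence with sum divisible by $\beta$, I simply take $X_0=X$ and $r=0$. Otherwise I extract one good block, and the key point is that this block can always be chosen of length at most $\beta$. This is exactly what the partial-sum proof of the quoted fact delivers: writing (any $\beta$ of) the terms as $a_1,\ldots,a_\beta$ and forming the partial sums $s_0=0,s_1,\ldots,s_\beta$, two of these $\beta+1$ residues coincide modulo $\beta$, say $s_i\equiv s_j\pmod\beta$ with $i<j$, so the block $a_{i+1}\cdot\ldots\cdot a_j$ has sum divisible by $\beta$ and length $j-i\in[1,\beta]$. When $|X|<\beta$ instead, any witnessing subsequence already has length $<\beta$. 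Hence in every case I obtain a nonempty $W$ with $|W|\le\beta$ and $\beta\mid\sigma(W)$.

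Setting $X'=X\cdot W^{-1}$ gives $|X'|<|X|$, so the induction hypothesis yields a factorization $X'=X_0\cdot X_1\cdot\ldots\cdot X_{r-1}$ of the required form; appending $X_r=W$ completes the factorization of $X$, with (i) holding for $X_0$ directly from the hypothesis and (ii) holding for $X_r=W$ by the previous paragraph. The only real care needed is to couple the two requirements correctly: one must run the extraction \emph{to exhaustion}, stopping precisely when no nonempty subsequence of the remainder has sum divisible by $\beta$, so that $X_0$ genuinely satisfies (i), while simultaneously guaranteeing via the partial-sum argument that every peeled block $X_j$ respects the length cap $|X_j|\le\beta$. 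I expect this bookkeeping, rather than any deep idea, to be the main (and essentially only) obstacle.
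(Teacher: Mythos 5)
Your proof is correct and takes essentially the same approach as the paper: the paper gives no explicit argument, stating only that the lemma is a direct application of the well-known fact that any $\beta$ integers contain a nonempty subsequence with sum divisible by $\beta$, and your inductive extraction-to-exhaustion (with the partial-sum pigeonhole argument guaranteeing each peeled block has length at most $\beta$) is precisely the bookkeeping that justification leaves implicit.
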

We will also use the following lemmas.
\begin{lemma}\label{lem:3.6}
Assume that $k\geq2$ and that every integer in $[1,D(I_k)]$ divides $t$. Let $S$ be a \zs over $I_k=[-k,k]$ that does not contain a \zsb of length $t$.  Let $S=S_1\cdot\ldots\cdot S_h$  
be a factorization of $S$ into minimal \zsb $S_i$, $1\leq i\leq h$. If $|S|\geq  t+k(k-1)$, then there exists some length $\beta$ such that 
$n_\beta=|\{S_i:\; |S_i|=\beta,\; 1\leq i\leq h\}|$ satisfies:
\[n_\beta> (2k-2)(2k-3).\]
\end{lemma}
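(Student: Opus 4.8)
The plan is to translate the statement into a purely numerical one about the multiset of block lengths, and then to isolate and quantify the obstruction to representing the target~$t$.

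First I would pass to subset sums. Each $S_i$ is a minimal \zsb, so $|S_i|\in[1,D(I_k)]=[1,2k-1]$, and any concatenation of the $S_i$ is again zero-sum. Hence if some sub-collection of the blocks had lengths summing to $t$, that concatenation would be a \zsb of $S$ of length $t$, contrary to hypothesis. Thus no sub-multiset of $\{|S_1|,\dots,|S_h|\}\subseteq[1,2k-1]$ sums to $t$. It therefore suffices to prove the following numerical claim: if $\ell_1,\dots,\ell_h\in[1,2k-1]$ have $\sum_i\ell_i\ge t+k(k-1)$, if every integer in $[1,2k-1]$ divides $t$, and if no sub-multiset of the $\ell_i$ sums to $t$, then some value occurs more than $(2k-2)(2k-3)$ times.

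Next I would localize the failure. Since $\sum_i\ell_i\ge t$, the largest subset sum $s$ that does not exceed $t$ is well defined, and $s<t$ by assumption; put $\delta=t-s\ge1$. Fix a sub-collection $Y$ realizing $s$ and call the complementary blocks \emph{unused}; their total length is $\sum_i\ell_i-s\ge k(k-1)+\delta$, so unused blocks exist. Every unused length exceeds $\delta$, for otherwise adjoining such a block to $Y$ would produce a subset sum in $(s,t]$, contradicting the maximality of $s$. Because unused lengths are at most $2k-1$, this forces $\delta\le 2k-2$, whence $\delta\in[1,2k-1]$ and so $\delta\mid t$. Maximality yields more: there is no sub-collection $A$ of unused blocks and $B\subseteq Y$ with $0<\sigma(A)-\sigma(B)\le\delta$ (where $\sigma$ abbreviates the sum of the selected lengths); in particular, taking $A$ to be a single unused block of length $y$, no $B\subseteq Y$ satisfies $\sigma(B)=y-\delta$, since otherwise $(Y\setminus B)\cup\{y\}$ would sum to exactly $t$.

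Finally I would extract a large multiplicity. The unused blocks have lengths among the $2k-1-\delta$ values in $[\delta+1,2k-1]$, while every block of length $\le\delta$ lies in $Y$; combined with the exchange-freeness above, this severely restricts which lengths can appear and how the used blocks can combine. I would then play the length surplus against these restrictions: since the unused total is at least $k(k-1)+\delta$ but is carried by at most $2k-1-\delta$ distinct values, and since the used blocks cannot represent any of the deficiencies $y-\delta$, the only way to sustain the gap at $t$ is for a single value to be repeated more than $(2k-2)(2k-3)$ times, matching the extremal configuration behind Lemma~\ref{lem:3.0} in which one value carries almost all of the length. The hard part is exactly this last step: converting ``exchange-freeness plus a surplus of $k(k-1)$'' into the explicit uniform bound $(2k-2)(2k-3)$. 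That the surplus $k(k-1)$ cannot be lowered is visible already for $k=2$, where the multiset $\{3,2,2\}$ has total $7=t+k(k-1)-1$ with $t=6$ yet admits no sub-multiset summing to $6$; so the argument must use the full surplus, and I expect the bookkeeping over the $\le 2k-1$ admissible lengths together with the divisibility $\delta\mid t$ to be the delicate point.
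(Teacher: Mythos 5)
Your proposal is not a complete proof: it builds scaffolding and then leaves the lemma's entire content unproved, as you yourself concede (``the hard part is exactly this last step''). The steps you do carry out are correct. Concatenations of the $S_i$ are zero-sum, so no sub-multiset of the block lengths may sum to $t$; the maximal subset sum $s\le t$ gives a gap $\delta=t-s\ge 1$; every unused length exceeds $\delta$, which forces $\delta\le 2k-2$ and hence $\delta\mid t$; and the exchange-freeness conditions follow from maximality. But nothing in the proposal converts these facts into the bound $n_\beta>(2k-2)(2k-3)$: the closing paragraph merely asserts that ``the only way to sustain the gap at $t$'' is a huge repetition of one value, which is the statement to be proved, not an argument for it. Note also that your reduction replaces the hypothesis ``$S$ has no zero-sum subsequence of length $t$'' by the weaker hypothesis ``no sub-collection of the blocks has lengths summing to $t$,'' so a completed version of your plan must establish a strictly stronger numerical claim; you give no proof, nor even evidence, that this stronger claim holds.

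For comparison, the paper's own proof goes a completely different and much shorter way: it never looks at subset sums and never invokes the hypothesis that $S$ has no zero-sum subsequence of length $t$. It observes that $2k-3$, $2k-2$, $2k-1$ are pairwise coprime divisors of $t$, so $t=p(2k-1)(2k-2)(2k-3)$ with $p\ge 1$, and then applies the pigeonhole principle to the at most $2k-1$ possible block lengths to conclude $n_\beta\ge (t+k(k-1))/(2k-1)>p(2k-2)(2k-3)$. You should know, however, that your instinct that the zero-sum hypothesis must enter somewhere is sound, and it points at a real defect in the paper's argument: pigeonhole on $h$ blocks taking at most $2k-1$ distinct lengths only yields $n_\beta\ge |S|/(2k-1)^2$, not $|S|/(2k-1)$, and the hypothesis-free statement that the paper's proof would establish is in fact false. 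For instance, with $k=2$ and $t=6$, the zero-sum sequence $0^{[2]}\cdot(1\cdot -1)^{[2]}\cdot(2\cdot -1\cdot -1)^{[2]}$ has length $12\ge t+k(k-1)$ and factors into minimal blocks with $n_1=n_2=n_3=2=(2k-2)(2k-3)$, so no multiplicity exceeds $(2k-2)(2k-3)$; this $S$ does contain zero-sum subsequences of length $6$, so it does not contradict Lemma~\ref{lem:3.6} itself, but it does contradict the conclusion of any proof that ignores that hypothesis, as the paper's does. So the lemma genuinely requires exploiting the hypothesis you tried to use—but exploiting it quantitatively is exactly the work your proposal postpones and never performs, and it is not routine: it is essentially the hard combinatorial core that Lemmas~\ref{lem:3.5} and~\ref{lem:3.7} (and ultimately Berger's proof of Conjecture~\ref{conj:1}) are built to handle.
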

\begin{proof}
Recall that $(a,b)$ denotes $\gcd(a,b)$. It is easy 
to see that 
\begin{equation}\label{lem3.6:eq1}
(2k-3,2k-2)=(2k-2,2k-1)=(2k-3,2k-1)=1.
\end{equation}
Since $k>1$ and every integer in $[1,D(I_k)]=[1,2k-1]$ is a factor of $t$, it follows from~\eqref{lem3.6:eq1}  
that $t=p(2k-1)(2k-2)(2k-3)$, for some $p\in\N$.  
By definition, we have $\max_{1\leq i\leq h}|S_i|\leq D(I_k)=2k-1$. Thus, it follows from the pigeonhole principle that there exists some length $\beta$ such that
\[n_\beta\geq\frac{t+k(k-1)}{\max\limits_{1\leq i\leq h}|S_i|}\geq \frac{t+k(k-1)}{2k-1}>p(2k-2)(2k-3).\]
\end{proof}
\begin{lemma}\label{lem:3.7}
Assume that $k\geq2$ and that every integer in $[1,D(I_k)]$ divides $t$. Let $S$ be a \zs over $I_k=[-k,k]$ of 
length $|S|\geq t+k(k-1)$ such that $S$ does not contain a \zsb of length $t$. Let $S=S_1\cdot\ldots\cdot S_h$  
be a factorization of $S$ into minimal \zsb $S_i$, $1\leq i\leq h$. Let $L=\{|S_i|:\; 1\leq i\leq h\}$, $\alpha=\max_{\ell\in L}\ell$, and let $n_\ell=|\{S_i:\; |S_i|=\ell,\; 1\leq i\leq h\}|$.

If there exists $\beta\in L$ such that $n_\beta\geq \alpha-1$, then 
\[|S|\leq t-\beta+(\beta-1)\max_{\ell\in L\setminus\{\beta\}}\ell.\]
\end{lemma}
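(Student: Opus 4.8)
The plan is to argue by construction: assuming $S$ has no \zsb of length $t$, I will show that the hypothesis $n_\beta\ge\alpha-1$ forces the stated bound, the engine being an explicit production of a length-$t$ \zsb from the block decomposition whenever $S$ is long. First I would record the setup. Since $\beta\in L$ and every element of $L$ is at most $D(I_k)$, the assumption that every integer in $[1,D(I_k)]$ divides $t$ gives $\beta\mid t$; write $N=t/\beta$. Put $M=\max_{\ell\in L\setminus\{\beta\}}\ell$. (If $L=\{\beta\}$ the right-hand side is read as $t-\beta$, and the bound is immediate: otherwise $N$ of the $\beta$-blocks would form a length-$t$ \zsb, so $n_\beta\le N-1$ and $|S|=n_\beta\beta\le t-\beta$.) Separate the factors of $S$ into the $n_\beta$ blocks of length $\beta$ and the remaining ones, and let $X\in\F(\Z)$ be the sequence whose terms are the lengths of the non-$\beta$ blocks; thus every term of $X$ lies in $[1,M]$, $T:=\sigma(X)$ is the total length of the non-$\beta$ blocks, and $|S|=n_\beta\beta+T$.

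Next I would apply Lemma~\ref{lem:3.5} to $X$ with parameter $\beta$, obtaining $X=X_0\cdot X_1\cdots X_r$ with $|X_0|\le\beta-1$ and, for each $j\ge1$, $\sigma(X_j)$ a positive multiple of $\beta$ with $|X_j|\le\beta$. (If $|X|<\beta$ I just take $X_0=X$ and $r=0$.) Writing $c_j=\sigma(X_j)$ and $d_j=c_j/\beta$, each $d_j$ is an integer in $[1,M]$, since $X_j$ has at most $\beta$ terms each at most $M$; likewise $\sigma(X_0)\le(\beta-1)M$. Crucially, each $X_j$ selects a collection of non-$\beta$ blocks whose concatenation is zero-sum of length $\beta d_j$. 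Setting $D=\sum_{j=1}^{r}d_j$, we have $T=\sigma(X_0)+\beta D$.

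The heart of the argument is the claim: if $D\ge N-n_\beta$, then $S$ contains a \zsb of length $t$. To prove it I would track the prefix sums $Q_0=0,Q_1,\ldots,Q_r=D$ of the $d_j$. Consecutive ones satisfy $Q_i-Q_{i-1}=d_i\le M\le n_\beta+1$, where the last inequality is exactly the hypothesis $n_\beta\ge\alpha-1\ge M-1$. Since the $Q_i$ increase from $0$ up to $D\ge N-n_\beta$, the least index $i$ with $Q_i\ge N-n_\beta$ satisfies $Q_i\le Q_{i-1}+M\le(N-n_\beta-1)+(n_\beta+1)=N$, so $Q_i\in[N-n_\beta,\,N]$. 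Concatenating the blocks selected by $X_1,\ldots,X_i$ (a zero-sum sequence of length $\beta Q_i$) with $N-Q_i$ of the available $\beta$-blocks — legitimate because $0\le N-Q_i\le n_\beta$ — produces a zero-sum subsequence of $S$ of length $\beta Q_i+(N-Q_i)\beta=\beta N=t$, which proves the claim.

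Finally, since by hypothesis $S$ has no \zsb of length $t$, the claim forces $D\le N-n_\beta-1$. Substituting,
\[|S|=n_\beta\beta+\sigma(X_0)+\beta D\le n_\beta\beta+(\beta-1)M+\beta\bigl(N-n_\beta-1\bigr)=t-\beta+(\beta-1)M,\]
which is the desired inequality. I expect the only genuine obstacle to be the density step in the claim: one must guarantee that the reachable multiples of $\beta$ are spaced closely enough to land in the narrow target window $[t-n_\beta\beta,\,t]$, and this is precisely where $n_\beta\ge\alpha-1$ enters, through the bound $d_i\le M\le n_\beta+1$ on the prefix-sum steps.
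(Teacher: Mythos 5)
Your proof is correct and follows essentially the same route as the paper's: both apply Lemma~\ref{lem:3.5} to the sequence of non-$\beta$ block lengths, bound $\sigma(X_0)\le(\beta-1)\max_{\ell\in L\setminus\{\beta\}}\ell$, and obtain the inequality by showing that otherwise one could assemble a zero-sum subsequence of length exactly $t$ from whole blocks. The only difference is presentational: your prefix-sum argument spells out the selection step (the existence of $n_\beta'\le n_\beta$ and $Q\subseteq[1,r]$ with $\beta n_\beta'+\sum_{q\in Q}\sigma(X_q)=t$) that the paper asserts without detail, and you also treat the degenerate cases $L=\{\beta\}$ and $|X|<\beta$ explicitly.
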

\begin{remark}\label{rmk:1}
By Lemma~\ref{lem:3.6}, there exists $\beta\in L$ such that $n_\beta>(2k-2)(2k-3)$. Moreover, $\alpha=\max_{\ell\in L}\ell\leq D(I_k)\leq(2k-2)(2k-3)+1$ for $k\geq2$. Thus, $n_\beta\geq \alpha$, i.e., the hypothesis of Lemma~\ref{lem:3.7} always holds.
\end{remark}
\begin{proof}
By hypothesis, there exists $\beta\in L$ such that $n_\beta\geq \alpha-1$.
Given a factorization $S=S_1\cdot\ldots\cdot S_h$ 
into minimal \zs $S_i$, $1\leq i\leq h$, consider the sequence
of lengths in $L\setminus\{\beta\}$:
\[X=\prod_{i=1,\,|S_i|\not=\beta}^h|S_i|=\prod_{\ell\in L\setminus\{\beta\}}\ell^{[n_\ell]}.\]
 It follows from Lemma~\ref{lem:3.5} that there exists a factorization $X=X_0\cdot X_1\ldots X_r$ such that 
\begin{equation}\label{eq:a2.1}
\mbox{$|X_0|\leq \beta-1$, and no subsequence $X_0$ 
	has a sum that is divisible by $\beta$.}
\end{equation}
\begin{equation}\label{eq:a2.2}
\mbox{ $|X_j|\leq \beta$ and $\beta$
	 divides $\sigma(X_j)$ for all $j\in[1,r]$}.
\end{equation}	
Thus, 
\begin{equation}\label{eq:a2.3}
\mbox{ $\sigma(X_j)=\sum\limits_{x\in X_j}x\leq |X_j|\cdot\max\limits_{x\in X_j}x\leq \beta\alpha$ for all $j\in[1,r]$}.
\end{equation}
Note that \eqref{eq:a2.2}, \eqref{eq:a2.3}, and the hypothesis on
 $\beta$ imply that: 
 \[\mbox{$\beta$  divides $t$; $n_\beta\geq\alpha-1$; $\sigma(X_j)\leq\alpha\beta$; and $\beta$  divides $\sigma(X_j)$ for all $j\in[1,r]$.}\]
 Thus, if 
 \[\beta n_\beta+\sum_{j=1}^r\sigma(X_j) \geq t,\]
 then there exists a nonnegative integer 
 $n_\beta'\leq n_\beta$ and a subset $Q\subseteq[1,r]$ such that 
 \[\beta n_\beta'+\sum_{q\in Q}\sigma(X_q) =t.\]
Then $S$ would contain a \zs of length $t$ obtained 
by concatenating $n_\beta'$ minimal \zsb of $S$ 
of length $\beta$ and all the \zs of $S$ whose lengths are in $X_q$ for all $q\in Q$. This contradicts the hypothesis of the theorem. Thus, $\beta n_\beta+\sum_{j=1}^r\sigma(X_j)<t$ must hold. Since $\beta$ divides both $t$ and $\sum_{j=1}^r\sigma(X_j)$, we obtain 
\[\beta n_\beta+\sum_{j=1}^r\sigma(X_j) \leq t-\beta.\] 
Thus, it follows from the definition of $X$ and $X_j$, $0\leq j\leq r$, that 
\begin{align}\label{eq:a2.6}
|S|=\sum_{\ell\in L} \ell n_\ell
&=\beta n_\beta+\sigma(X)\cr
&=\beta n_\beta+\sum_{j=1}^r\sigma(X_j)+\sigma(X_0)\cr
&\leq t-\beta+\sigma(X_0).
\end{align}
Next, it follows from \eqref{eq:a2.1} and \eqref{eq:a2.6} that 
\begin{equation*}\label{eq:a2.7}
|S|\leq t-\beta+\sigma(X_0)\leq t-\beta+|X_0|\max_{\ell\in  L\setminus\{\beta\}}\ell\leq t-\beta+(\beta-1)\max_{\ell\in L\setminus\{\beta\}}\ell.
\end{equation*}
\end{proof}
\begin{proof}[Proof of Theorem~\ref{thm:1}]
We first prove part $(i)$. Suppose that $\m_t(I_k)$ is finite.
Then it follows from Lemma~\ref{lem:3.2} that every odd integer in $[1,D(I_k)]$ divides $t$. Thus, it remains to show that 
if $a$ is an even integer in $[1,D(I_k)]$, then $a$ divides $t$.

\n\textbf{Case 1:} $a=2^{e}$ for some integer $e\geq1$.

	Lemma~\ref{lem:3.1} implies that for any $p\in\N$, the sequence $S=\left(1\cdot-1\right)^{[p]}$ is a \zs whose 
\zsb have lengths that are multiples of $2$.  Therefore, if $2$ does not divide $t$, then $\m_t(I_k)
\geq |S|=2p$, where $p$ can be chosen to be arbitrarily large. Thus, $2$ divides $t$ if $\m_t(I_k)$ is finite.

Now assume that $e>1$. Since the $\gcd$ of two numbers divides their difference, $(a/2-1,a/2+1)\leq2$.  
But $2$ does not divide $a/2-1$ or $a/2+1$; and so 
$(a/2-1,a/2+1)=1$. Lemma~\ref{lem:3.2} implies that for any $p\in\N$, the sequence $S^{[p]}$ with 
$S= (a/2-1)^{[a/2+1]} \cdot (-a/2-1)^{[a/2-1]}$ is 
a \zs whose \zsb have lengths that are 
multiples of $|S|=(a/2+1)+(a/2-1)=a$.  Thus, 
if $a$ does not divide $t$, we can obtain arbitrarily 
long \zs over $I_k=[-k,k]$ that do not contain \zsb of length $t$,
because $p$ can be chosen to be arbitrarily large. 
Thus, $a$ divides $t$ if $\m_t(I_k)$ is finite.

\n\textbf{Case 2:}  $a$ is not a power of $2$. 

Then $a=2^{e}j$, where $e$ and $j$ are nonnegative 
integers such that $j$ is odd.  By Lemma~\ref{lem:3.2}, $j$ divides $t$, and 
if follows from Case~$1$ that $2^{e}$ divides $t$.  
Since $j$ is odd, $(2^e,j)=1$. Since $2^e$ and $j$ are factors of $t$, it follows that $2^ej$ divides $t$.

Thus, it follows from Case~$1$, Case~$2$, and Lemma~\ref{lem:3.2} that every  integer in $[1,D(I_k)]$ divides $t$.

\bs Since the lower bound of $\m_t(I_t)$ in Theorem~\ref{thm:1}$(ii)$ follows from Lemma~\ref{lem:3.0}, it remains to prove its upper bound.
Let $k,t\in\N$ be such that every integer in $[1,D(I_k)]$ divides $t$. In particular, $t$ is even.
Let $S$ be an arbitrary \zs over $I_k=[-k,k]$ that does not contain a \zsb of length $t$. 

If $k=1$, then it follows from Theorem~\ref{thm:La} that $D(I_k)=2$. Thus, $2$ divides $t$ and $|S|=x_1+2x_2$ for some nonnegative integers $x_1$ and $x_2$.  
If $|S|\geq t$, then $x_1\geq2$ or $x_2\geq t/2$ (because $t$ is even). This implies 
that there exit nonnegative integers $x_1'\leq x_1$ and $x_2'\leq x_2$ such that $x_1'+2x_2'=t$. Thus $S'=(1\cdot -1)^{[x_2']}\cdot 0^{[x_1']}$ is a \zsb of $S$ of length $t$, which contradicts 
the fact that $S$ does not contain a \zs of length $t$. Hence $|S|\leq t-1$, and $\m_t(I_k)\leq |S|+1=t$.

Now assume $k\geq2$.  Since $S$ 
was arbitrarily chosen,  it follows that if $|S|\leq t+k(k-1)-1$, then 
\[\m_t(I_k)\leq |S|+1\leq t+k(k-1)\leq t+(2k-2)(2k-3),\]
and the upper bound in Theorem~\ref{thm:1}$(ii)$ follows.
So we may assume that $|S|\geq t+k(k-1)$. Let $S=S_1\cdot\ldots\cdot S_h$  
be a factorization of $S$ into minimal \zsb. Let $L=\{|S_i|:\; 1\leq i\leq h\}$, $\alpha=\max_{\ell\in L}\ell$, and let $n_\ell=|\{S_i:\; |S_i|=\ell,\; 1\leq i\leq h\}$.
Then Remark~\ref{rmk:1} implies that there exists $\beta\in L$ is such that $n_\beta\geq \alpha-1$. 
If  $\beta=\alpha$, then Lemma~\ref{lem:3.7} yields
\begin{align*}
|S|\leq t-\alpha+(\alpha-1)\max_{\ell\in L\setminus\{\alpha\}}\ell\leq t-\alpha+(\alpha-1)^2.
\end{align*}
If  $1\leq \beta\leq \alpha-1$, then Lemma~\ref{lem:3.7} also yields 
\begin{align*}
|S|
&\leq t+\max_{1\leq \beta\leq \alpha-1}\left(-\beta+(\beta-1)\max_{\ell\in L\setminus\{\beta\}}\ell\right)
\cr
&\leq t+\max_{1\leq \beta\leq \alpha-1}\left(-\beta+(\beta-1)\alpha\right)\cr
&= t+\left(-(\alpha-1)+(\alpha-2)\alpha\right)\cr
&= t-\alpha+(\alpha-1)^2.\cr
\end{align*}
So in all cases, we obtain
\begin{align}\label{eq:uba}
|S|\leq t-\alpha+(\alpha-1)^2\leq t-(2k-1)+(2k-2)^2,
\end{align}
where we used the fact $\alpha\leq D(I_k)=2k-1$.
Since $S$ was chosen to be an arbitrary \zs over $I_k=[-k,k]$ which does not contain a \zsb of length $t$, it follows that 
\[\m_t(I_k)\leq |S|+1\leq t-(2k-1)+(2k-2)^2+1=t+(2k-2)(2k-3).\]
\end{proof}
\begin{proof}[Proof of Corollary~\ref{cor:1}]
For $k\in\{1,2\}$, the corollary holds since the upper and lower bounds of $\m_t(I_k)$ given by Theorem~\ref{thm:1} are both equal to $t+k(k-1)$. 

For $k=3$, it also follows from Theorem~\ref{thm:1} that 
$t+6\leq \m_t(I_3)\leq t+12$. Thus, it remains to show that if 
$S$ is an arbitrary \zs over $I_3$ which does not contain a \zs 
of length $t$, then $|S|\not=t+d$ for all $d\in[6,11]$. 

Consider a factorization $S=S_1\cdot\ldots\cdot S_h$ into minimal \zs $S_i$, $i\in[1,h]$. Let $L=\{|S_i|:\; 1\leq i\leq h\}$, $\alpha=\max_{\ell\in L}\ell$, and let $n_\ell=|\{S_i:\; |S_i|=\ell,\; 1\leq i\leq h\}|$. Thus,  $\alpha\leq D(I_3)=5$. If $\alpha\leq4$, then Lemma~\ref{lem:3.7} yields
\[|S|\leq  t+\max_{1\leq\alpha\leq 4}\left((\alpha-1)^2-\alpha\right)= t+(4-1)^2-4=t+5.\]
Thus, we may assume that $\alpha=\max\limits_{\ell\in L}\ell=5$ for any factorization of $S$.

If $\beta\in\{1,2\}$ and $n_\beta\geq 4$, then  Lemma~\ref{lem:3.7} yields
\[|S|\leq  t+\max_{\beta\in\{1,2\}}\left( (\beta-1)\alpha-\beta\right)= t+(2-1)5-2=t+3.\]
Next, suppose that $R$ is a \zsb of $S$ with length at least $4$.
Then $R\cdot -R$ can be trivially factorize into $|W|\geq 4$  \zs of length $2$. This would yields a new factorization $S=S'_1\cdot\ldots\cdot S'_h$ with $n_2\geq 4\geq n_5-1$,
which would imply that $|S|< t+5$ by the above analysis.

Also note that if $n_\ell\geq t/\ell$ holds for some length $\ell\in L$, then we obtain a 
\zsb of $S$ of length $t$ by concatenating $t/\ell$ \zsb of length $\ell$ in $S$. This would contradict the definition of $S$.  
Thus, we can assume that $n_\ell\leq t/\ell-1$ for all $\ell\in L\subseteq [1,5]$.

To recapitulate, we may assume that for any factorization $S=S_1\cdot\ldots\cdot S_h$, with $S_L=\prod_{i=1}^h |S_i|$ and $n_\ell=|\{S_i:\; |S_i|=\ell,\; 1\leq i\leq h\}|$, we have:  
\begin{enumerate}
\item[(i)] $S_L=5^{[n_5]}\cdot 4^{[n_4]}\cdot 3^{[n_3]}\cdot 2^{[n_2]}\cdot1^{[n_1]}$, where $0\leq n_\ell\leq t/\ell-1$ for $\ell\in[1,5]$; $n_5\geq 1$; and $n_1,n_2\leq 3$.
\item[(ii)] There is a one-to-one correspondence between 
the subsequences $S'_L$ of $S_L$ and the \zs $S'$ of $S$ with length 
$\sigma(S'_L)$. 
\item[(iii)]  If $R$ is \zs over $I_3$ such that $|R|\geq 4$, then $R$ and $-R$ cannot both be subsequences of $S$.
\item[(iv)] If $R$ is a minimal \zsb  of $S$ such that $|R|=5$,
then $R=3^{[2]}\cdot(-2)^{[3]}$. \\ 
(This follows from $(iii)$ and the fact $A=3^{[2]}\cdot(-2)^{[3]}$ and $-A$ are the only minimal \zs of length $5$ over $I_3=[-3,3]$. Thus, if $-A$ is the \zsb of $S$, then we can analyze $-S$ instead of $S$.)
\end{enumerate}
 
We now prove the following claims.

\bs\n {\bf Claim~1:} {\em  If $5\cdot 3^{[4]}$ is a subsequence of $S_L$, then  $|S|\not=t+d$ for all $d\in[6,11]$}

If $n_4+n_2+n_1\geq1$, then either $5\cdot 4\cdot 3^{[4]}$, or  $5\cdot 3^{[4]}\cdot 2$, or $5\cdot 3^{[4]}\cdot 1$ is a subsequence of $S_L$, which implies that $\Sigma(S_L)$ contains all the integers in $[6,11]$. Thus, ${n_4}={n_2}={n_1}=0$, which implies that $S_L=5^{[n_5]}\cdot 3^{[n_3]}$. If $n_5\leq 1$, then 
\[|S|=\sigma(S_L)=5n_5+3n_3\leq 5+3(t/3-1)<t+5.\]
Thus, we may assume that  
 
\centerline{$S_L=5^{[n_5]}\cdot 3^{[n_3]}$, where $n_5\geq 2$ and ${n_3}\geq 4$.}

Then $\Sigma(S_L)$ contain all the integers in $[6,11]\setminus\{7\}$; and so $|S|\not=t+d$ for $d\in[6,11]\setminus\{7\}$. It remains to show that $|S|\not=t+7$.

Note that the only minimal \zs of length $3$ over
$[-3,3]$ are (up to sign) $B_1=2\cdot(-1)^{[2]}$ and $B_2=3\cdot-2\cdot-1$. Since $5\cdot 3^{[4]}$ is a subsequence of $S_L$, it follows from the assumptions (i)--(iv) (see above) that 
$S'=A\cdot X\cdot Y\cdot Z\cdot W$ is a subsequence of $S$, where  $A=3^{[2]}\cdot(-2)^{[3]}$ and $X,Y,Z,W\in\{-B_1,B_1,-B_2,B_2\}$. By inspecting the sequence $S'$ for all possible choices of $X$, $Y$, $Z$, and $W$; we see that $S'$ admits a \zs of length $7$. For instance, if $X=Y=Z=B_2$, then 
\[S'=A\cdot B_2^{[3]}\cdot W=A^{[2]}\cdot 3\cdot(-1)^{[3]}\cdot W\]
contains the subsequence $3\cdot(-1)^{[3]}\cdot W$, which is a \zs of length $4+|W|=7$. Hence, $|S|\not=t+7$.
 Thus, $|S|\not=t+d$ for all $d\in[6,11]$.
 
\bs\n {\bf Claim~2:} {\em If $5\cdot 4^{[2]}\cdot 3$ is a subsequence of $S_L$, then  $|S|\not=t+d$ for all $d\in[6,11]$.}

If $n_3\geq2$, or $n_2\geq 1$, or $n_1\geq1$, then either $5\cdot 4^{[2]}\cdot 3^{[2]}$, or $5\cdot 4^{[2]}\cdot 3\cdot 2$, or  $5\cdot 4^{[2]}\cdot 3\cdot 1$ is a subsequence of $S_L$, which implies that $\Sigma(S_L)$ contains 
all the integers in $[6,11]$. In these cases, $|S|\not=t+d$ for $d\in[6,11]$, we are done. Thus, we may assume that $n_2=n_1=0$
and $n_3=1$, which implies that $S_L=5^{[n_5]}\cdot 4^{[n_3]}\cdot3$. 
If $n_5\leq1$, then 
\[|S|=\sigma(S_L)=5n_5+4n_4+3\leq 5+4(t/4-1)+3<t+5.\] 
Thus, we may assume that  

\centerline{$S_L=5^{[n_5]}\cdot 4^{[n_4]}\cdot 3$, where $n_5\geq 2$ and ${n_4}\geq 2$.}

Thus, $5^{[2]}\cdot 4^{[2]}\cdot 3$ is a subsequence of $S_L$, which implies that $\Sigma(S_L)$ contain all the integers in $[7,11]$. Thus $|S|\not=t+d$ for $d\in[7,11]$. It remains to 
show that $|S|\not=t+6$.

Note that the only minimal \zs of length $4$ over
$[-3,3]$ are (up to sign) $C_1=3\cdot(-1)^{[3]}$ and $C_2=3\cdot 1 \cdot(-2)^{[2]}$. Since $5\cdot 4^{[2]}\cdot 3$ is a subsequence of $S_L$, it follows from the assumptions (i)--(iv) that 
$S'=A\cdot X\cdot Y\cdot Z$ is a subsequence of $S$, where  $A=3^{[2]}\cdot(-2)^{[3]}$, $X,Y\in\{-C_1,C_1,-C_2,C_2\}$, 
and $Z\in\{-B_1,B_1,-B_2,B_2\}$. By inspecting the sequence $S'$ for all possible choices of $X$, $Y$, and $Z$; we see that $S'$ admits a \zs of length $6$. 
For instance, if $X=C_1$ and $Y=C_2$, then 
\[S'=A\cdot C_1\cdot C_2\cdot Z=A\cdot (3\cdot -1\cdot-2)^{[2]}\cdot (1\cdot-1)\cdot Z\]
contains the subsequence $(3\cdot -1\cdot-2)\cdot Z$, which is a \zsb of length $3+|Z|=6$. Hence, $|S|\not=t+6$.
Thus, $|S|\not=t+d$ for all $d\in[6,11]$.

\bs\n {\bf Claim~3:} {\em If $5\cdot 4^{[3]}$ is a subsequence of $S_L$, then  $|S|\not=t+d$ for all $d\in[6,11]$.}

If $n_3\geq 1$, then $5\cdot 4^{[2]}\cdot3$ is a subsequence of $S_L$, and we are back in Case~2. Thus, we may assume that $n_3=0$. If ${n_2}\geq1$ or ${n_1}\geq2$, then either  $5\cdot 4^{[3]}\cdot2$ or  $5\cdot 4^{[3]}\cdot1^{[2]}$ is a subsequence of $S_L$, which implies that $\Sigma(S_L)$ contains all the integers in $[6,11]$. Thus, $S$ contains \zsb of length $\ell$ for all $\ell\in[6,11]$. Hence, $|S|\not=t+d$ for $d\in[6,11]$. Thus, we may assume that $n_2=0$ and $n_1\leq 1$. Thus,  $S_L=5^{[n_5]}\cdot 4^{[n_3]}\cdot 1^{[n_1]}$. Moreover, if $n_5\leq1$, then 
\[|S|=\sigma(S_L)=5n_5+4n_4+{n_1}\leq 5+4(t/4-1)+1<t+5.\]
Thus, we may assume that 

\centerline{$S_L=5^{[n_5]}\cdot 4^{[n_4]}\cdot 1^{[n_1]}$, where $n_5\geq 2$, $n_4\geq 3$, and $n_1\leq 1$.}
Since $5^{[2]}\cdot 4^{[3]}$ is a subsequences of $S_L$, 
it follows that $\Sigma(S_L)$ contain all the integers in $[8,10]$.
Thus, $S$ admits \zs of length $\ell$ for all $\ell\in[8,10]$.
Hence, $|S|\not=t+d$ for all $d\in[8,10]$.
Moreover, it follows from the assumptions (i)--(iv) that 
$S'=A\cdot X\cdot Y\cdot Z$ is a subsequence of $S$, where  $A=3^{[2]}\cdot(-2)^{[3]}$ and $X,Y,Z\in\{-C_1,C_1,-C_2,C_2\}$.
By inspecting the sequence $S'$ for all possible choices 
of $X$, $Y$, and $Z$; we see that $S'$ admits a \zs of length $7$. Hence, $|S|\not=t+7$. Overall, we obtain $|S|\not=t+d$ for any $d\in[7,10]$.

If $5\cdot 4^{[4]}$ is a subsequence of $S_L$, it again follows from the assumptions (i)--(iv) that 
$S'=A\cdot X\cdot Y\cdot Z\cdot W$ is a subsequence of $S$, where  $A=3^{[2]}\cdot(-2)^{[3]}$ and $X,Y,Z,W\in\{-C_1,C_1,-C_2,C_2\}$.
By inspecting the sequence $S'$ for all possible choices 
of $X$, $Y$, $Z$, and $W$; we see that $S'$ admits \zs of lengths $6$ and $11$. In this case, $|S|\not=t+d$ for all $d\in[6,11]$. 
Thus, we may assume that 

\centerline{$S_L=5^{[n_5]}\cdot 4^{[3]}\cdot 1^{[n_1]}$, where $n_5\geq 2$ and $n_1\leq 1$.}

\n Now, it remains to show that $|S|\not=t+a$ for $a\in\{6,11\}$.
However, if $|S|=t+a$, then 
\[5{n_5}+4(3)+{n_1}=\sigma(S_L)=|S|=t+a\Rightarrow 5{n_5}=t+a-12-{n_1}.\] This  is a contradiction since $5$ divides $t$ (by hypothesis) and $5$ does not divide $a-12-{n_1}$
for $a\in\{6,11\}$ and ${n_1}\in\{0,1\}$. Thus, $|S|\not=t+d$ for all $d\in[6,11]$.

\bs Based on Claim~1--Claim~3, we may assume the following:
\begin{enumerate}
	\item[(v)] $S_L=5^{[n_5]}\cdot 4^{[n_4]}\cdot 3^{[n_3]}\cdot 2^{[n_2]}\cdot1^{[n_1]}$, where $0\leq n_\ell\leq t/\ell-1$ for all $\ell\in[1,5]$; ${n_1},{n_2},{n_3}\leq3$; ${n_4}\leq 2$; $(n_4,n_3)\not=(2,1)$;  and $n_5\geq1$.
\end{enumerate}
We will use this assumption in the following cases.

\bs\n {\bf Case 1:} {\em $|S|\not=t+6$.}

Assume that $|S|=t+6$. If ${n_1}\geq1$,
then $5\cdot 1$ is a subsequence of $S_L$, which 
implies that $S$ contains a \zsb of length $5+1=6$ 
whose complementary sequence in $S$ is a \zsb of length $t$.  Thus,
${n_1}=0$. By a similar reasoning, we infer that the following 
conditions hold: ${n_3}\leq 1$; and $n_4\geq 1\Rightarrow{n_2}=0$. 
Moreover, it follows from condition (v) that $(n_4,n_3)\not=(2,1)$.
Consequently, either $S_L=5^{[n_5]}\cdot 4^{[n_4]}\cdot 3^{[n_3]}$
with $n_3\leq 1$, ${n_4}\leq 2$, and $(n_4,n_3)\not=(2,1)$; or  $S_L=5^{[n_5]}\cdot 3^{[n_3]}\cdot2^{[n_2]}$
with $n_3\leq 1$ and ${n_2}\leq 2$. Thus,
\[|S|=\sigma(S_L)\leq 5{n_5}+8\leq 5(t/5-1)+8<t+6,\]
which is a contradiction. Thus, $|S|\not=t+6$.

\bs
\bs\n {\bf Case 2:} {\em $|S|\not=t+7$.}

Assume that $|S|=t+7$. If $n_2\geq1$,
then $5\cdot 2$ is a subsequence of $S_L$, which 
implies that $S$ contains a \zsb of length $5+2=7$ 
whose complementary sequence in $S$ is a \zsb of length $t$.  Thus,
$n_2=0$. By a similar reasoning, we infer that the following 
conditions hold:  ${n_1}\leq 1$;  
${n_4}\geq1\Rightarrow {n_3}=0$; ${n_3}\geq1\Rightarrow {n_4}=0$; and ${n_3}\geq2\Rightarrow {n_1}=0$. Consequently, either $S_L=5^{[n_5]}\cdot 4^{[n_4]}\cdot 1^{[n_1]}$
with ${n_4}\leq2$ and $n_1\leq 1$; or $S_L=5^{[n_5]}\cdot 3\cdot 1$, or $S_L=5^{[n_5]}\cdot 3^{[n_3]}$ with $n_3\leq3$. Thus, 
\[|S|=\sigma(S_L)\leq 5{n_5}+9\leq 5(t/5-1)+9<t+7,\]
which is a contradiction. Thus  $|S|\not=t+7$.

\bs\n {\bf Case 3:} {\em $|S|\not=t+8$.}

Assume that $|S|=t+8$. If ${n_3}\geq1$,
then $5\cdot 3$ is a subsequence of $S_L$, which 
implies that $S$ contains a \zsb of length $5+3=8$ 
whose complementary sequence in $S$ is a \zsb of length $t$. Thus,
${n_3}=0$. By a similar reasoning, we infer that ${n_4}\leq1$; 
${n_2}\leq 3$; ${n_1}\leq 2$; ${n_2}\geq1\Rightarrow {n_1}=0$; and ${n_1}\geq1\Rightarrow {n_2}=0$. Consequently either $S_L=5^{[n_5]}\cdot 4\cdot2$, or $S_L=5^{[n_5]}\cdot 4\cdot1^{[n_1]}$, or $S_L=5^{[n_5]}\cdot 2^{[n_2]}$, or $S_L=5^{[n_5]}\cdot1^{[n_1]}$, where ${n_2}\leq 3$ and $n_1\leq2$. Thus,
\[|S|=\sigma(S_L)\leq 5{n_5}+6\leq 5(t/5-1)+6<t+8,\]
which is a contradiction. Thus, $|S|\not=t+8$.

\bs\n {\bf Case 4:} {\em $|S|\not=t+d$ for $d\in[9,11]$.}

Assume that $|S|=t+9$. 
If ${n_3}\geq1$, then $3$ is a subsequence of $S_L$ which 
implies that $S$ contain a \zs $T$ of length $3$.
Thus $S'=S\cdot T^{-1}$ is a \zs of length $|S|-3=t+6$ 
which does not contain a \zsb of length $6$ and, equivalently,
length $t$. This contradicts Case~1, where we showed that no such 
\zs exists. Thus, ${n_3}=0$.  Similarly, $n_2=0$ (by Case~2) and $n_1=0$ (by Case~3). Consequently, $S_L=5^{[n_5]}\cdot 4^{[n_4]}$ with ${n_4}\leq 2$. 
Thus,
\[|S|=\sigma(S_L)=5{n_5}+4n_4\leq 5(t/5-1)+4(2)<t+9,\]
which is a contradiction. Thus, $|S|\not=t+9$.

Since $n_5\geq2$, $S$ contains a \zs of length $\sigma(5^{[2]})=10$. Thus, $|S|\not=t+10$.
 
Since $n_5\geq1$, $S$ contains a \zs $T$ of length $5$.
Thus $S'=S\cdot T^{-1}$ is a \zs of length $|S|-5=t+6$ 
which does not contain a \zsb of length $6$ and, equivalently, 
length $t$. This contradicts Case~1. Thus, $|S|\not=t+11$.

\bs In conclusion, we have shown that if $S$ is an arbitrary \zs over $I_3=[-3,3]$ which does not contain a \zs of length $t$, then
$|S|=t+d$ for $d\in[6,11]$. Thus, $\m_t(I_3)=t+6$.
\end{proof}
\begin{remark}
Aaron Berger~\cite{AB} has recently announced a proof of Conjecture~\ref{conj:1}.
\end{remark}
\section{Appendix}\label{sec:app}
In this section, we include Zhong's proofs of Lemma~\ref{lem:QZ} 
and Remark~\ref{rmk:QZ}.

\begin{proof}[Proof of Lemma~\ref{lem:QZ}]\
	
\n $(i)$ Since $\ls(G)\leq \m(G)$, it suffices to prove that 
$\m(G)\geq\ls(G)$. 
Let $S=\prod_{i=1}^{\ls(G)-1}g_i$ be a sequence in $\F(G)$ of 
length $|S|=\ls(G)-1$ such that $S$ has no zero-sum subsequence of 
length exp$(G)$. Assume that $\sigma(S)=h\in G$ and let $t\in\N$ be such that $(\ls(G)-1)t\equiv 1\pmod{{\rm exp}(G)}$. Then $(\ls(G)-1)th=h$ in $G$. Define $S'=\prod_{i=1}^{\ls(G)-1}(g_i-th)$. Since $\sigma(S')=\sigma(S)-(\ls(G)-1)th=0$ and $S'$ does not 
contain a zero-sum subsequence of length exp$(G)$, it follows that $\m(G)\geq \ls(G)$.

\bs\n $(ii)$ Let $S\in \B(G)$ be such that $|S|=\ls(G)-1$. We want to prove that $S$ contains a zero-sum subsequence of length $n={\rm exp}(G)$. If we assume to the contrary that $S$ does not contain a zero-sum subsequence of length $n$, then Property D implies that there exists $T\in\F(G)$ such that $S=T^{[n-1]}$. Thus, $|T|=c$
and $\sigma(T)=0$. Therefore $T^{[n/c]}$ is a zero-sum sequence 
of length $n$, a contradiction.
\end{proof}
\begin{proof}[Proof of Remark~\ref{rmk:QZ}]\
	
\n $(i)$ Let $n$ be odd and $G\cong\Z^2_n$. Since $\ls(G)=4n-3$,
then $\gcd(\ls(G)-1,n)=1$. Thus, $\ls(G)=\m(G)$ by Lemma~\ref{lem:QZ}$(i)$.

\n $(ii)$ Let $h\geq2$ be an integer and $G\cong\Z^2_{2^h}$.
Then ${\rm exp}(G)=2^h$, $\ls(G)=4(2^h-1)+1$, $\gcd(\ls(G)-1,{\rm exp}(G))=4$, and $G$ has Property $D$ (by~\cite[Theorem~3.2]{GGS}). Thus, Lemma~\ref{lem:QZ}$(ii)$ yields $\m(G)<\ls(G)$.  Since  $\gcd(\ls(G)-2,{\rm exp}(G))=1$, the proof of Lemma~\ref{lem:QZ}$(i)$ yields $\m(G)>\ls(G)-2$. Thus,  $\m(G)=\ls(G)-1$.
\end{proof}

\bs\n{\bf Acknowledgement:}\
We thank Alfred Geroldinger for providing 
references and for his valuable comments which helped 
clarify the definitions and terminology. We also thank Qinghai Zhong for allowing us to include Lemma~\ref{lem:QZ} and Remark~\ref{rmk:QZ}.

\end{document}